\newtheorem{theorem}{Theorem}
\newtheorem{lemma}[theorem]{Lemma}
\newtheorem{corollary}[theorem]{Corollary}
\theoremstyle{remark}
\newcommand{\C}{\mathbb{C}}
\newcommand{\F}{\mathbb{F}}
\newcommand{\K}{\mathbb{K}}
\begin{document}

\title[On the Instability of the Riemann Hypothesis]
      {On the Instability of the Riemann Hypothesis over Finite Fields}
\date{}


\author{P. M. Gauthier}

\address{D\'epartement de math\'ematiques et de statistique,
         Universit\'e de Montr\'eal,
         CP-6128 Centreville,
         Montr\'eal, H3C3J7,
         CANADA}

\email{gauthier@dms.umontreal.ca}


\author{N. Tarkhanov}

\address{Universit\"{a}t Potsdam,
         Institut f\"{u}r Mathematik,
         Postfach 60 15 53,
         14415 Potsdam,
         Germany}

\email{tarkhanov@math.uni-potsdam.de}


\subjclass[2000]{Primary 11M99; Secondary 30K99, 30E10}

\keywords{Zeta-function}

\dedicatory{}

\begin{abstract}
We show that it is possible to approximate the zeta-function of a curve over
a finite field by meromorphic functions which satisfy the same functional
equation and moreover satisfy (respectively do not satisfy) the analogue of
the Riemann hypothesis.
In the other direction, it is possible to approximate holomorphic functions by
simple manipulations of such a zeta-function.
We also consider the value distribution of zeta functions of function fields over finite fields from the viewpoint of Nevanlinna theory.
\end{abstract}

\maketitle

\tableofcontents

\section{Introduction}

The Riemann hypothesis concerns the Riemann zeta-function, but analogues of
the Riemann hypothesis have been formulated for other zeta-functions such as
zeta-functions of number fields and zeta functions of function fields.
There is no number field for which the Riemann hypothesis has been either
confirmed or disproved.
The only zeta-functions for which the Riemann hypothesis has been confirmed
are zeta-functions of function fields over finite fields
Of course, there is the hope of imitating the proof of the Riemann hypothesis
over function fields in order to prove the Riemann hypothesis for the Riemann
zeta-function, but this approach encounters serious obstacles.

The Riemann hypothesis for the Riemann zeta-function is unstable in the sense
that, in the vicinity of the Riemann zeta-function $\zeta (s)$, there are
functions (different from $\zeta$) which satisfy as well as functions which do
not satisfy an analogue of the Riemann hypothesis.
In the present paper we shall show that an analogous situation holds for
zeta-functions of curves over finite fields.
This portion of the study can be labeled
   ``approximation {\em of} zeta-functions of function fields over finite
     fields.''
We shall also investigate
   ``approximation {\em by} zeta-functions of function fields over finite
     fields''
and show that such zeta-functions have certain approximation properties
analogous to those of the Riemann zeta-function.
Namely, we shall show that all holomorphic functions can be approximated by
elementary manipulations of zeta-functions of function fields over finite
fields.

By a function field (of one variable) $\F$ over a field $\K$, we mean a
finitely generated field extension of $\K$ of transcendence degree $1$.
Equivalently, $\F$ is the function field of a smooth, irreducible, projective,
algebraic curve over $\K$.
F.K. Schmidt has defined the zeta-function $\zeta_\F(s)$ of the function field
$\F$ as follows.
\begin{equation}
\label{Schmidt}
   \zeta_\F (s)
 = \sum_\alpha \frac{1}{|\alpha|^s}
 = \prod_p \left( 1 - \frac{1}{|p|^s} \right)^{-1},
\end{equation}
where
   $\alpha$ ranges over the positive divisors,
   $p$ ranges over the prime divisors
and
   $|\alpha|$ is the absolute norm.
Since it would take too much room to explain this definition in detail and
since we shall use a different (but equivalent) definition, we refer the
interested reader to the files of Peter Roquette \cite{R}, where an excellent
historical account is given of the proof of the Riemann hypothesis for
zeta-functions of curves over finite fields.

Helmut Hasse first proved the analogue of the Riemann hypothesis for elliptic
curves over finite fields in 1934.
The case of general curves was obtained by Andr\'e Weil in 1942.
Over 30 years later, Pierre Deligne extended the Riemann hypothesis to
arbitrary varieties over finite fields.
In his description of the Riemann hypothesis as one of the millennium
problems, on the Clay Institute website, Enrico Bombieri ranks this as one of
the crowning achievements of 20th century mathematics.
Bombieri also writes that this is the best evidence in support of the Riemann
hypothesis.

It was observed by C.F. Osgood \cite{O} and further developed by
                   P. Vojta \cite{V}
that there is a formal analogy between certain aspects of number theory and
the value distribution theory of meromorphic functions (Nevanlinna theory).
In particular, M. Van Frankenhuijsen \cite{F} suggests that Nevanlinna theory
might be used to adapt the proof of the Riemann hypothesis for zeta-functions
of function fields over finite fields in order to obtain a proof of the
Riemann hypothesis for the Riemann zeta-function.

For the Riemann zeta-function, the basic notions of Nevanlinna theory were
studied only recently
   \cite{Y} (see also \cite{AG})
and more generally, J\"orn Steuding (\cite{S3}, \cite{S4} and \cite{S7}) has
studied the Nevanlinna theory for the Selberg class, but it seems that the
zeta-functions of function fields are not in the Selberg class.
One of the requirements to be in the Selberg class is that the function have a
pole at $1$ and nowhere else.
Steuding says this axiom is very important for Selberg class.
However, the zeta-functions of function fields over finite fields have many
poles.

In this paper, we study the basic notions of Nevanlinna theory for
zeta-functions of function fields over finite fields.
Then, we find analogues of results on approximation of the Riemann
zeta-function by functions which satisfy and by functions which do not satisfy
an analogue of the Riemann hypothesis.
In the other direction, we also show the possibility of approximating
arbitrary holomorphic functions by simple manipulations of zeta-functions
of function fields over finite fields.
Our motivation is to better understand the relation between the
Riemann zeta-function and zeta-functions of function fields over finite fields.
We find it interesting to see how many things are the same for zeta-functions of
function fields over finite fields and for Riemann's zeta-function.
This is a bit in the spirit of Bombieri and others that essential features of both objects should be the same although their analytic characters are rather different.
Whether or not this will actually lead to a better understanding of the
Riemann hypothesis, we believe the investigation opens a new line of research
of independent merit.

\section{Zeta functions of function fields over finite fields}

Henceforth, when we speak of a function field $\F$, it is understood that $\F$
is function field (of one variable) over a field $\K$ as defined in the
introduction and moreover that the base field $\K$ is finite.
A finite field $\K$ is uniquely determined by its size, which must be of the
form $q = p^r$, where $p$ is a rational prime and $r$ a natural number.
When the base field $\K$ is finite, the series and infinite product in
(\ref{Schmidt}) both converge for $\Re s > 1$.
Hence, the zeta-function $\zeta_\F (s)$ is a well defined holomorphic function
in the half-plane $\Re s > 1$.

Making the substitution $u = q^{-s}$, where $q$ is the order of the finite
field $\F$ and setting $Z (u) = \zeta_\F (s)$, it is known that $Z (u)$ is a
rational function.
In other words, $\zeta_\F (s)$ is a rational function of $q^{-s}$.
Since $Z (u)$ is rational, it is defined for all values of
$u \in \overline{\C}$, not just on the image $\{ u: u=q^{-s}, \Re s > 1 \}$.
Thus, $Z (u (s))$ is a meromorphic function on all of $\C$ which coincides
with the zeta-function $\zeta_\F (s)$ on the half-plane $\Re s > 1$.
Therefore $Z (u (s))$ is the (unique) meromorphic continuation of the
zeta-function $\zeta_\F (s)$ to the whole complex plane $\C$.

The zeta-function $\zeta_\F (s)$ satisfies the functional equation
\begin{equation}
\label{functional zeta}
   \zeta_\F (1-s) = q^{(g-1)(2s-1)} \zeta_\F (s),
\end{equation}
where $g$ is the genus.
Setting $Q (s)= q^{(g-1) s}$, for $f$ meromorphic on $\C$, we write
$\Lambda_\F (f,s) = Q(s) f(s)$.
The functional equation for $\zeta_\F$ can then be written in the form
$$
   \Lambda_\F (\zeta_\F, 1-s) = \Lambda_\F (\zeta_\F, s),
$$
which is symmetry with respect to the point $1/2$.
Since $\Lambda (\zeta_\F, s)$ is real for real $s$, the functional equation
can also be written
\begin{equation}
\label{1/2 symmetry}
   \overline{\Lambda_\F (\zeta_\F, 1-\overline{s})}
 = \Lambda_\F (\zeta_\F, s),
\end{equation}
which is the required form of functional equation for a function to belong to
Selberg class.

The rational function $Z (u)$ can be written in the form
$$
   Z (u) = \frac{L (u)}{(1-qu) (1-u)},
$$
where $L (u)$ is a polynomial of the form
$$
   L(u)
 = \sum_{j=0}^{2g} c_j u^j
 = 1 + (N-q-1) u + \cdots + q^g u^{2g}.
$$
If we represent $\F$ as the function field of a smooth curve $C$ then $N$ is the number of $\K\,$-rational points of $C$, that is, points of $C$ each of whose coordinates belong to $\K$, and all complex roots of $L (u)$ have norm $q^{-1/2}$ (which is ``the Riemann hypothesis'').
This is essentially what Weil showed (and then Deligne in a more general case).
Also, in some sense, almost any polynomial verifying this can appear (as the $L\,$-function of an Abelian variety this is really ``almost true,'' due essentially to Waterhouse, but to be the one corresponding to a curve is much more delicate, something studied by E. Nart and others in some recent papers).
Note that $Z (u)$ has simple poles at the points $u = 1/q$ and $u = 1$.

The coefficients of $L (u)$ satisfy the symmetry relation
\begin{equation}
\label{c_j}
   c_j = c_{2g-j} q^{j-g}.
\end{equation}
Thus,
$$
   L (u)
 = \sum_{j=0}^{2g} c_j u^j
 = 1 + (N-q-1) u + \cdots + q^{g-1} (N-q-1) u^{2g-1} + q^g u^{2g},
$$
and the zeta-function $\zeta_\F (s)$ has the representation
\begin{equation}
\label{zeta}
   \zeta_\F (s)
 = \frac{1+(N-q-1) u + \cdots + q^{g-1} (N-q-1) u^{2g-1} + q^g u^{2g}}
        {(1-qu)(1-u)}.
\end{equation}
Since the right side has poles at $u = 1$ and $u = 1/q$ and $u = q^{-s}$,
the zeta-function $\zeta_\F$ has simple poles at the corresponding points
$$
   \Re s = 1, \,
   \Im s = j \frac{2 \pi}{\log q},
   \,\,\,\,\,\,\,\,
   j = 0, \pm 1, \pm 2, \cdots
$$
and
$$
   \Re s=0, \,
   \Im s = j \frac{2 \pi}{\log q},
   \,\,\,\,\,\,\,\,
   j = 0, \pm 1, \pm 2, \cdots.
$$
Hence, the function $\zeta_\F$ has infinitely many poles on the lines
   $\Re s = 1$ and
   $\Re s = 0$.
In particular, $\zeta_\F$ has simple poles at $s = 1$ and $s = 0$.

The residue of $\zeta_\F (s)$ at the simple pole $s = 1$ is an important
number given by the formula
$$
   \lim_{s \rightarrow 1} (s-1) \zeta_\F (s)
 = \frac{q^{1-g} \cdot h}{(q-1) \log q}.
$$
This is the class number formula giving the residue of $\zeta_\F (s)$ at
$s = 1$ as a function of important invariants of the function field $\F$,
namely, the class number $h = h_\F$, of the function field, the genus $g$ of
the function field and the cardinality $q$ of the base field $\K$.
In terms of $L$ this yields the class number formula
$$
   L (1) = h.
$$

The symmetry relation (\ref{c_j}) is equivalent to the assertion that the
polynomial $L (u)$ satisfies the functional equation
\begin{equation}
\label{eq.fefL}
   L \left( \frac{1}{qu} \right) = q^{-g} u^{-2g} L (u).
\end{equation}
This also follows directly from the functional equation (\ref{functional zeta}) for $\zeta_\F$.
In fact, the functional equations (\ref{functional zeta}) and
                                  (\ref{eq.fefL})
are equivalent.
Moreover, writing $\mathcal{L} (u) = u^{-g} L(u)$, for $u \not=0$, these two
functional equations are also equivalent to the functional equation
\begin{equation}
\label{eq.efefL}
   \mathcal{L} \left( \frac{1}{q u} \right) = \mathcal{L} (u),
\end{equation}
which expresses symmetry with respect to holomorphic inversion with respect to
the critical circle $|u| = 1/\sqrt{q}$, corresponding to the critical axis
$\Re s = 1/2$, via the substitution $u = q^{-s}$.
Also, from the Dirichlet series, we see that $\zeta_\F (\sigma)$ is real, for
$\sigma$ real, $1 < \sigma < +\infty$.
Thus $\zeta_\F (\overline{s}) = \overline{\zeta_\F (s)}$, for $\Re s > 1$.
The same symmetry must hold for all $s$.
Hence,
\begin{equation}
\label{real symmetry}
   \zeta_\F (\overline{s})
 = \overline{\zeta_\F (s)}.
\end{equation}

It follows from this double symmetry (\ref{real symmetry}) and
                                     (\ref{1/2 symmetry})
that the zeros of $\zeta_\F$ are symmetric with respect to the real axis and
the point $1/2$ and from the Dirichlet series representation for $\zeta_\F$,
we see that $\zeta_\F$ has no zeros for $\Re s > 1$.
Thus, from the symmetry relations it follows that $\zeta_\F$, unlike the Riemann
zeta-function, has no zeros for $\Re s < 0$.
Hence, all zeros of the zeta-function $\zeta_\F (s)$ lie in the critical strip
$0 \le \Re s \le 1$, and they are symmetric with respect to the real axis and
the point $s = 1/2$.
To prove the associated Riemann hypothesis, then, it is sufficient to show
that $\zeta_\F (s)$ has no zeros in the open half-plane $\Re s > 1/2$.
Equivalently, it is sufficient to show that the function $Z (u)$ has no zeros
in the punctured open disc $0 < |u| < q^{-1/2}$.
At $u = 0$, the function $Z (u)$ has the value $1$.
Hence, the function
$$
   L (u) = (1-u) (1-qu) Z (u)
$$
also has the value $1$ at $u=0$ and, to show the Riemann hypothesis for
$\zeta_\F(s)$, it is sufficient to show that the function $L (u)$ has no zeros
in the punctured open disc $0 < |u| < q^{-1/2}$.
Equivalently, it is sufficient to show that the function $L^\prime/L$ has no
poles in $0 < |u| < q^{-1/2}$.
In a neighborhood of $u=0$,
$$
   \frac{L^\prime}{L}(u) = \frac{1}{u} \sum_{n=1}^\infty a_n u^n.
$$
It is sufficient to show that the radius of convergence of this series is at
least $q^{-1/2}$.
Thus, to prove the Riemann hypothesis, it is sufficient to show that
   $a_n = O (q^{n/2})$
as $n \rightarrow \infty$.
Weil showed that
$$
   |a_n| \leq 2g \cdot q^{n/2}
 \,\,\,\,\,\,\,\, (n=1, 2, \ldots)
$$
where $g$ is the genus.


\section{Nevanlinna theory}

Nevanlinna theory studies value distribution and growth of meromorphic functions.
If $\varphi (r)$ and
   $\psi (r)$
are real-valued functions defined for $r > 0$, we shall say that $\varphi$
is asymptotic to $\psi$ (as $r \to \infty$), denoted $\varphi \sim \psi$, if
$$
\lim_{r \to \infty} \frac{\varphi (r)}{\psi (r)} = 1.
$$

If $f$ is a function meromorphic on $\C$, we denote, as usual, the Nevanlinna
characteristic function of $f$ by
$$
   T (r,f) = m (r,f) + N (r,f),
 \,\,\,\,\,\,\,\,
   0 \leq r < \infty,
$$
where
   $m (r,f)$ is the proximity function and
   $N (r,f)$ is the integrated counting function for the value $\infty$.
For entire functions,
$$
   T (r,f)
 = m(r,f)
 = \frac{1}{2\pi} \int_0^{2\pi} \log^+ |f (r e^{i \theta})| d \theta.
$$
Thus, to calculate the characteristic functions of entire functions, the
following properties are useful:
$$
   \log^+ |a+b|
 \leq
   \log^+ (2 \max \{ |a|,|b| \})
 \leq
   \log^+ |a| + \log^+ |b| + \log 2,
$$
\begin{equation}
\label{log+(ab)}
   \log^+ |ab|
 \leq
   \log^+ |a| + \log^+ |b|,
\end{equation}
$$
   \log^+ (1/a) = \log^+ a - \log a.
$$
If $f_1, f_2, \ldots, f_n$ are meromorphic functions on $\C$, then
$$
   T \Big( r, \sum_{j=1}^n f_j \Big)
 \leq
   \sum_{j=1}^n T (r, f_j) + \log n,
$$
and
\begin{equation}
\label{T(fg)}
   T \Big( r, \prod_{j=1}^n T (r, f_j) \Big)
 \leq
   \sum_{j=1}^n T (r, f_j).
\end{equation}
If $f$ is meromorphic in $\C$ and about zero has the expansion
$$
   f (s) = \sum_{j=k}^\infty a_j z^j,
$$
with $a_k \not= 0$, then
$$
    T (r,f) = T (r,1/f) + \log |a_k|.
$$
More generally, the First Fundamental Theorem of Nevanlinna Theory states
that, if
   $\alpha$ is a finite value and
   the function $f$ is not identically equal to $\alpha$,
then, setting
   $T (r,\alpha,f) = T(r,1/(f-\alpha))$,
the characteristic function has the following property:
$$
   T (r,f) = T (r,\alpha,f) + O (1),
 \,\,\,\,\,\,\,\,
   \mbox{as} \,\,\,\,\,\,\,\, r \rightarrow \infty.
$$

Let us look more closely at the characteristic function for $\zeta_\F$,
$$
   T (r,\zeta_\F)
 = T (r, Z (u))
 = T \Big( r, \frac{L (u)}{(1-u) (1-qu} \Big).
$$
First, consider $f = 1/(1-u) (1-qu)$ as a function of $s$.
Then, $f$ has a simple pole at $s = 0$, so $k = -1$ and $a_{-1}$ is the
residue of $f$ at $0$, which is non-zero.
In fact, since the residue is invariant under change of chart, $a_{-1}$ is the
residue of $1/(1-u)(1-q u)$ as a function of $u$ at $u=1$, which is 1/(q-1), 
and so
$$
   T (r,1/(1-u)(1-qu)) = T (r,(1-u)(1-qu)) + \log (q-1).
$$
Applying the above properties of the characteristic function to the
zeta-function, we have the following:
\begin{eqnarray*}
   T (r,\zeta_\F)
 & = &
   T \Big( r,\frac{L (u)}{(u-1)(1-qu)} \Big)
\\
 & = &
   \max \Big( T (r,L (u)), T \Big( r,\frac{1}{(u-1)(1-qu)} \Big) \Big)
\\
 & = &
    \max (T (r,L (u)), T (r,(u-1)(1-qu)) + \log(q-1))
\end{eqnarray*}
whence
$$
    T (r,\zeta_\F)
 = \max (m (r,L (u)), m(r,(u-1)(1-qu)) + \log (q-1)).
$$

The order of a function $f$ meromorphic on $\C$ is given by
$$
   \rho = \limsup_{r \rightarrow \infty} \frac{\log T (r,f)}{\log r}
$$
and the lower order is given by
$$
    \underline{\rho}
 = \liminf_{r \rightarrow \infty} \frac{\log T (r,f)}{\log r}.
$$
If $f$ is an entire function, then
$$
   \rho
 = \limsup_{r \rightarrow \infty}
   \frac{\log m (r,f)}{\log r}
$$
and
$$
   \underline{\rho}
 = \liminf_{r \rightarrow \infty}
   \frac{\log m (r,f)}{\log r}.
$$

The properties of order expressed in the following theorem are well known.

\begin{theorem}
\label{t.tpoo}
\begin{equation}
\label{rho(1/f)}
   \rho_f = \rho_{1/f},
 \,\,
   \mbox{if} \,\, 1/f  \,\, \mbox{is defined.}
\end{equation}
\begin{equation}
\label{rho(f+g)}
   \rho_{f+g} \leq \max (\rho_f,\rho_g).
\end{equation}
\begin{equation}
\label{rho(fg)}
  \rho_{fg} \leq \max (\rho_f,\rho_g).
\end{equation}
Moreover, if $\rho_f < \rho_g$, then the last two inequalities become
equalities.
\end{theorem}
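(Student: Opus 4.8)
The plan is to prove the three stated properties of the order $\rho$ directly from the definition, using the basic inequalities for the Nevanlinna characteristic function collected earlier in the paper, and then to handle the strict-inequality refinement separately.

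First I would prove \eqref{rho(1/f)}. The identity $T(r,f) = T(r,1/f) + O(1)$ (a special case of the First Fundamental Theorem, or the displayed formula $T(r,f) = T(r,1/f) + \log|a_k|$ for the leading coefficient) shows that $T(r,f)$ and $T(r,1/f)$ differ by a bounded quantity. Since $\log(T(r,f) + O(1))/\log r$ and $\log T(r,f)/\log r$ have the same $\limsup$ as $r \to \infty$ (the additive constant is absorbed once $T(r,f)$ is large, which happens unless $f$ is constant, in which case both orders are $0$), we get $\rho_f = \rho_{1/f}$. Next, for \eqref{rho(f+g)} and \eqref{rho(fg)} I would use the subadditivity estimates $T(r, f+g) \le T(r,f) + T(r,g) + \log 2$ and $T(r, fg) \le T(r,f) + T(r,g)$ (the $n = 2$ cases of the displayed inequalities for sums and products of meromorphic functions). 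Setting $M(r) = \max(T(r,f), T(r,g))$, each right-hand side is at most $2M(r) + O(1)$, so $\log T(r, f+g) \le \log M(r) + \log 2 + o(1)$ and dividing by $\log r$ and taking $\limsup$ gives $\rho_{f+g} \le \max(\rho_f, \rho_g)$, and likewise for the product.

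For the final assertion, suppose $\rho_f < \rho_g$. I would argue that $\rho_{f+g} = \rho_g$: the inequality $\le$ is already established, and for $\ge$ note that $g = (f+g) - f$, so by \eqref{rho(f+g)} applied to $f+g$ and $-f$ (and $\rho_{-f} = \rho_f$, which is immediate from the definition since $T(r,-f) = T(r,f)$) we get $\rho_g \le \max(\rho_{f+g}, \rho_f)$; since $\rho_f < \rho_g$, this forces $\rho_g \le \rho_{f+g}$. The multiplicative case is analogous: $g = (fg)(1/f)$, so $\rho_g \le \max(\rho_{fg}, \rho_{1/f}) = \max(\rho_{fg}, \rho_f)$ by \eqref{rho(fg)} and \eqref{rho(1/f)}, and again $\rho_f < \rho_g$ yields $\rho_g \le \rho_{fg}$.

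The only genuinely delicate point is the interplay between $\limsup$ and the additive $O(1)$ terms: one must check that if $\rho_f$ is computed as a $\limsup$ rather than a limit, replacing $T(r,f)$ by $T(r,f) + c$ does not change it, and that the dominant term really controls the $\limsup$ even when $T(r,f)$ and $T(r,g)$ oscillate and their maximum is achieved along different sequences of $r$. This is routine but is where care is needed; everything else is a direct transcription of the characteristic-function inequalities. Since the theorem is described as well known, I would keep this argument brief and cite a standard reference (e.g. Hayman or Nevanlinna) for the subadditivity estimates rather than reproving them.
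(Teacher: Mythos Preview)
Your argument is correct and is exactly the standard way these facts are derived from the basic inequalities for $T(r,\cdot)$; the paper itself gives no proof at all, merely declaring the properties ``well known'' before the statement and moving on. So there is nothing to compare: you have supplied a proof where the paper supplies none, and your final remark that one should keep it brief and cite a standard reference is in fact precisely what the paper does (minus the explicit citation).
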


From the properties we have listed for the characteristic function, one can show the following.

\begin{theorem}
\label{t.ihinc}
Suppose $h (u)$ is a non-constant rational function and $u = q^{-s}$.
Then,
   $\underline{\rho}_h = \rho_h = 1$,
as a function of $s$.
\end{theorem}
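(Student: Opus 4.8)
The plan is to prove the two estimates $\rho_h \le 1$ and $\underline{\rho}_h \ge 1$ separately; since $\underline{\rho}_h \le \rho_h$ in all cases, together these force $\underline{\rho}_h = \rho_h = 1$.

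For the upper bound, write $h = P/Q$ with $P, Q$ polynomials and $Q \not\equiv 0$. Then $P(q^{-s}) = \sum_{i=0}^{\deg P} c_i\, e^{-i s \log q}$ is a finite sum whose nonconstant summands $e^{-i s \log q}$ ($i \ge 1$) each have order $1$, since $e^{c s}$ with $c \ne 0$ is well known to have order $1$, and whose constant summand has order $0$; hence $\rho_{P(q^{-s})} \le 1$ by (\ref{rho(f+g)}), and likewise $\rho_{Q(q^{-s})} \le 1$. Since $Q(q^{-s})$ is not identically zero, (\ref{rho(1/f)}) and (\ref{rho(fg)}) now give $\rho_h = \rho_{P(q^{-s})/Q(q^{-s})} \le \max(\rho_{P(q^{-s})}, \rho_{Q(q^{-s})}) \le 1$, and in particular $\underline{\rho}_h \le \rho_h \le 1$.

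For the lower bound, the key observation is that $q^{-(s+\omega)} = q^{-s}$ holds exactly when $\omega$ is an integer multiple of $\omega_0 := 2\pi i/\log q$, so that $f(s) := h(q^{-s})$ is a meromorphic function of $s$ which is periodic with period $\omega_0$; moreover $f$ is nonconstant, because $s \mapsto q^{-s}$ maps $\C$ onto $\C \setminus \{0\}$ and a nonconstant rational function is nonconstant on that set. I would then choose a point $s_0$ that is not a pole of $f$ (the poles of $f$ being isolated), put $\alpha := f(s_0) \in \C$, and note that $f - \alpha \not\equiv 0$ whereas $f(s_0 + n\omega_0) = \alpha$ for every $n \in \Z$. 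Thus the $\alpha$-points of $f$ include the progression $\{s_0 + n\omega_0\}_{n \in \Z}$, whose number of terms with $|s| \le r$ equals $\frac{\log q}{\pi}\, r + O(1)$ as $r \to \infty$; consequently the integrated counting function of $\alpha$-points satisfies $N(r,\alpha,f) \ge c\, r$ for some $c > 0$ and all large $r$, so that $T(r,\alpha,f) \ge N(r,\alpha,f) \ge c\, r$. Invoking the First Fundamental Theorem, $T(r,f) = T(r,\alpha,f) + O(1)$, this gives $\underline{\rho}_h = \underline{\rho}_f \ge \liminf_{r \to \infty} \frac{\log(c\, r)}{\log r} = 1$, and combining with the upper bound completes the proof.

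The upper bound is little more than a bookkeeping application of Theorem \ref{t.tpoo}, so the real content is the lower bound, and the point that needs care there is that it is a bound on the \emph{lower} order: one needs $N(r,\alpha,f)$ to grow at least linearly for \emph{all} large $r$, not merely along a sequence of radii, and it is precisely the periodicity of $u = q^{-s}$ that delivers this uniform linear growth. I would also flag the mildly degenerate case in which $h$ has neither a zero nor a pole in $\C \setminus \{0, \infty\}$ — for instance $h(u) = u$, where $f(s) = q^{-s}$ is entire and zero-free — but this causes no trouble, since the argument uses only some value $\alpha$ that is actually attained by $f$, rather than a zero or pole of $f$.
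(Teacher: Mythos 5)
Your proof is correct, and it is essentially the argument the paper has in mind: the paper states this theorem without proof, saying only that it follows ``from the properties we have listed for the characteristic function,'' and your upper bound is exactly the bookkeeping with Theorem~\ref{t.tpoo} and the composition laws that this remark invites, while your lower bound rests on the periodicity of $u=q^{-s}$ and the resulting linear growth of $N(r,\alpha,f)$, which is precisely the device the paper itself deploys a few paragraphs later (``We shall use the property that $u=q^{-s}$ is periodic in $s$ with period $2\pi i/\log q$ \ldots thus $n(r,\alpha,\zeta_\F)\sim\frac{k\log q}{2\pi}r$''). The one place you are more careful than the paper's sketch is in flagging that the lower-order bound requires linear growth of $N(r,\alpha,f)$ along \emph{all} large radii, not a subsequence, and that periodicity supplies this uniformly; that observation is worth keeping, as is your note that the argument must use an attained value $\alpha$ rather than relying on zeros or poles, to cover cases such as $h(u)=u$.
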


The following particular case is worth stating as a theorem.

\begin{theorem}
For the zeta-function $\zeta_\F$ over a finite field $\F_q$, both the order
and the lower order are $1$.
\end{theorem}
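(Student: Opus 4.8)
The plan is to derive this at once from Theorem~\ref{t.ihinc}. Recall that, under the substitution $u = q^{-s}$, one has $\zeta_\F(s) = Z(u)$ as meromorphic functions on $\C$, where
$$
   Z(u) = \frac{L(u)}{(1-qu)(1-u)}
$$
is a rational function of the variable $u$. So it suffices to check that $Z$ is non-constant and then to invoke Theorem~\ref{t.ihinc} with $h = Z$, which immediately yields $\underline{\rho}_{\zeta_\F} = \rho_{\zeta_\F} = 1$.

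First I would dispose of the non-constancy of $Z$. This is clear in several ways: $Z$ has a genuine simple pole at $u = 1/q$ (and at $u = 1$), whereas $Z(0) = 1$ is finite, so $Z$ cannot be constant. Equivalently, one may quote the class number formula, $\lim_{s \to 1}(s-1)\zeta_\F(s) = q^{1-g} h / ((q-1)\log q) \neq 0$, which shows $\zeta_\F$ has a pole and hence is not constant. (If $g = 0$ then $L \equiv 1$ and $Z(u) = 1/((1-qu)(1-u))$ is still a non-constant rational function; if $g \geq 1$ then $L$ already has degree $2g \geq 2$.)

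Since the genuine content is packaged in Theorem~\ref{t.ihinc}, there is essentially no obstacle left here; the only thing to be careful about is that the substitution $u = q^{-s}$ used in that theorem is exactly the one defining $\zeta_\F$, which it is. If instead one wanted a self-contained argument, the hard part would be to estimate the proximity functions directly: using the displayed identity $T(r,\zeta_\F) = \max\bigl(m(r,L(u)),\, m(r,(u-1)(1-qu)) + \log(q-1)\bigr)$, each term on the right is the proximity function of a non-constant polynomial in $q^{-s} = e^{-s\log q}$, hence of an entire function of order exactly $1$; the leading exponential term dominates, so each such $m(r,\cdot)$ is asymptotic to a positive constant times $r$, giving $T(r,\zeta_\F) \sim c\,r$ with $c > 0$ and therefore $\rho = \underline{\rho} = 1$. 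The one mild point needing attention in that route is that the lower-order exponential terms cannot produce cancellation in the proximity integral, which follows from the subadditivity of $\log^+$ on sums up to an additive constant, as recorded before Theorem~\ref{t.tpoo}.
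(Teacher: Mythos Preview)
Your argument is correct and follows exactly the paper's approach: observe via formula~(\ref{zeta}) that $\zeta_\F(s)=Z(u)$ is a non-constant rational function of $u=q^{-s}$, then apply Theorem~\ref{t.ihinc}. The additional remarks on non-constancy and the optional self-contained sketch are fine but unnecessary here.
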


\begin{proof}
By formula (\ref{zeta}), the zeta-function $\zeta_\F$ is a non-constant
rational function of $u = q^{-s}$.
\end{proof}

Let $f$ be meromorphic on $\C$.
The Nevanlinna defect (or deficiency) of $f$ for a value
$\alpha \in \overline{\C}$ is
$$
   \delta (\alpha,f)
 = 1 - \limsup_{r \rightarrow \infty} \frac{N (r,\alpha,f)}{T (r,f)},
$$
and $\alpha$ is said to be a deficient value for $f$ if the deficiency
$\delta (\alpha,f)$ is not zero.
The Second Fundamental Theorem of Nevanlinna Theory asserts that
$$
   \sum_{\alpha \in \overline{\C}} \delta (\alpha,f) \leq 2.
$$

Let us now count the zeros and, more generally, the $\alpha$-values of the
zeta-function $\zeta_\F (s)$.
We shall use the property that $u = q^{-s}$ is periodic in $s$ with period
$2 \pi i/\log q$ and that it is injective in each period strip.
Thus, if $Z (u) = \alpha$ has $k$ solutions $u \in \C$ (counting multiplicity),
then
$$
   n (r,\alpha,\zeta_\F) \sim \frac{k \log q}{2 \pi} r
$$
and so also
$$
    N (r,\alpha,\zeta_\F) \sim \frac{k \log q}{2 \pi} r.
$$
Thus, the Nevanlinna defect for the value $\alpha$
$$
   \delta (\alpha,\zeta_\F)
 = 1 - \limsup_{r \rightarrow \infty}
       \frac{N (r,\alpha,\zeta_\F)}{T (r,\zeta_\F)}
$$
is the same, for all values $\alpha \not= Z (\infty)$.
By the Second Fundamental Theorem,
$$
   \sum_{\alpha \in \overline{\C}} \delta (\alpha,\zeta_\F)
 \leq
   2,
$$
and so
   $\delta (\alpha,\zeta_\F) = 0$ for all values $\alpha \not= Z (\infty)$.

From formula (\ref{zeta}), we see that if the genus $g$ is zero, then
$$
   \zeta_\F (s) = Z (u) = \frac{1}{(1-qu)(1-u)},
 \,\,\,\,\,\,\,\,
   \mbox{with} \,\,\,\,\,\,\,\,  u = q^{-s}.
$$
The function $Z (u)$ has no finite zeros and so the zeta-function has no
zeros.
This certainly confirms the Riemann hypothesis in this case.
$Z (u)$ assumes every non-zero value $\alpha$, including $\infty$ twice
(counting multiplicity) in $\C$.
Consequently, $\delta (\alpha,\zeta_\F) = 0$, for all non-zero values $\alpha$.
Since $\zeta_\F$ has no zeros, of course $\delta (0,\zeta_\F) = 1$ and $0$ is
a totally deficient value.

If the genus $g$ is $1$ (elliptic curves), then
$$
   \zeta_\F (s) = Z (u) = \frac{1 + (N-q-1) u + q u^2}{(1-qu)(1-u)},
 \,\,\,\,\,\,\,\,
    \mbox{with} \,\,\,\,\,\,\,\,  u = q^{-s}.
$$
The function $Z (u)$ takes the value $1$ with multiplicity $1$ at $\infty$ and
also at $0$.
Thus, $Z (u)$ assumes every value of $\overline{\C}$ other than $1$ the same
number of times in $\C$.
As before, it follows that the deficiencies $\delta (\alpha,\zeta_\F)$ are
zero, for all values of $\overline{\C}$ different from $1$.
Consider the value $1$.
Since $Z (u)$ assumes the value $1$ only once in $\C$, the function
$N (r,1,\zeta_\F)$ is asymptotic to $r \log q / 2 \pi$.
Similarly, since the value $0$ is assumed twice by $Z (u)$ in $\C$, the
function $N (r,0,\zeta_\F)$ is asymptotic to $2 r \log q / 2 \pi$.
Thus,
\begin{eqnarray*}
   \delta (1,\zeta_\F)
 & = &
   1 - \limsup_{r \rightarrow \infty}
       \frac{N (r,1,\zeta_\F)}{T (r,\zeta_\F)}
\\
 & = &
   1 - \frac{1}{2} \limsup_{r \rightarrow \infty}
       \frac{N (r,0,\zeta_\F)}{T (r,\zeta_\F)}
\\
 & = &
   \frac{1}{2}
 + \frac{1}{2}
   \Big( 1 - \limsup_{r \rightarrow \infty} \frac{N (r,0,\zeta_\F)}
                                                 {T (r,\zeta_\F)} \Big)
\\
 & = &
   \frac{1}{2} + \frac{1}{2} \delta (0,\zeta_\F)
\\
 & = &
   \frac{1}{2}.
\end{eqnarray*}
Hence, in the elliptic case ($g = 1$), the only deficient value is $1$ and its
deficiency is $1/2$.
We have $\delta (\alpha,\zeta_\F) = 0$ for all other values $\alpha$ of
$\overline{\C}$.
It is interesting that the value $1$ plays such a special role among all
values of $\overline{\C}$.
We note that, in the case of $L$-functions, Steuding \cite{S4} shows that
$\delta (\alpha,\zeta) = 0$, for all {\em finite} values $\alpha$.
For the Riemann zeta-function, Ye \cite{Y} had shown that there are no finite
deficient values (see also \cite{AG}).

If $g > 1$, then the function $Z (u)$ has $2 g$ finite $\alpha$-points, for
each finite value $\alpha$.
Hence, the Nevanlinna integrated counting function $N (r,\alpha,\zeta_\F)$ is
asymptotic to $2 g r \log q / 2 \pi$.
The integrated counting function for the poles $N (r,\zeta_\F)$ is also
asymptotic to $2 g r \log q/2\pi$.
Thus, as before, $\delta (\alpha,\zeta_\F) = 0$ for all finite values $\alpha$,
while
\begin{eqnarray*}
   \delta (\infty,\zeta_\F)
 & = &
   1 - \limsup_{r \rightarrow \infty} \frac{N (r,\zeta_\F)}{T (r,\zeta_\F)}
\\
 & = &
   1
 - \frac{1}{g} \limsup_{r \rightarrow \infty} \frac{N (r,0,\zeta_\F)}
                                                   {T (r,\zeta_\F)}
\\
 & = &
   \frac{g-1}{g}
 + \frac{1}{g}
   \Big( 1 - \limsup_{r \rightarrow \infty} \frac{N (r,0,\zeta_\F)}
                                                 {T (r,\zeta_\F)} \Big)
\\
 & = &
   \frac{g-1}{g} + \frac{1}{g} \delta (0,\zeta_\F)
\\
 & = &
   \frac{g-1}{g}.
\end{eqnarray*}

We summarize these calculations.

\begin{theorem}
Let $g$ be the genus of the function field.
$$
   \begin{array}{llllllll}
     \mbox{If}
   & g=0,
   & \mbox{then}
   & \delta (\alpha,\zeta_\F) = 0
   & \mbox{for all}
   & \alpha \not= 0,
   & \mbox{and}
   & \delta (0,\zeta_\F) = 1.
\\
     \mbox{If}
   & g=1,
   & \mbox{then}
   & \delta (\alpha,\zeta_\F) = 0
   & \mbox{for all}
   & \alpha \not= 1,
   & \mbox{and}
   & \delta (1,\zeta_\F) = 1/2.
\\
     \mbox{If}
   & g > 1,
   & \mbox{then}
   & \delta (\alpha,\zeta_\F) = 0
   & \mbox{for all}
   & \alpha \not= \infty,
   & \mbox{and}
   & \delta( \infty,\zeta_\F) = (g-1)/g.
   \end{array}
$$
\end{theorem}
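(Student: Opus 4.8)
These three formulas are the summary of the value-distribution analysis of $\zeta_\F$ carried out above, and the plan is to organise that analysis into a single uniform argument. The device is the substitution $u=q^{-s}$. As recorded above, $u=q^{-s}$ is periodic with period $2\pi i/\log q$ and injective on each period strip, so that if the equation $Z(u)=\alpha$ has $k(\alpha)$ solutions in $\C$ (counted with multiplicity), then
$$
   N(r,\alpha,\zeta_\F) \sim \frac{k(\alpha)\log q}{2\pi}\, r ,
 \qquad r\to\infty .
$$
Since $Z=L/((1-qu)(1-u))$ has numerator of degree $2g$ and denominator of degree $2$, and the two have no common factor because $L(1)=h\ne 0$ and $L(1/q)=q^{-g}h\ne 0$, the rational map $Z\colon\overline{\C}\to\overline{\C}$ has degree $d=\max\{2g,2\}$, and therefore $k(\alpha)=d-m_\infty(\alpha)$, where $m_\infty(\alpha)$ is the multiplicity with which $Z$ attains the value $\alpha$ at $u=\infty$ — the order of the pole of $Z$ at $\infty$ when $\alpha=\infty$, and $0$ when $Z(\infty)\ne\alpha$.

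First I would note that $m_\infty(\alpha)=0$, hence $k(\alpha)=d$, for every $\alpha\ne Z(\infty)$, so that all these values share the asymptotics $N(r,\alpha,\zeta_\F)\sim (d\log q/2\pi)\,r$ and in particular all have the same Nevanlinna deficiency. As there are infinitely many such $\alpha$ and the Second Fundamental Theorem gives $\sum_{\alpha\in\overline{\C}}\delta(\alpha,\zeta_\F)\le 2$, this common deficiency must be $0$; equivalently $\limsup_{r}N(r,\alpha,\zeta_\F)/T(r,\zeta_\F)=1$ for $\alpha\ne Z(\infty)$, which pins down $T(r,\zeta_\F)\sim (d\log q/2\pi)\,r$. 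We then obtain at one stroke that $\delta(\alpha,\zeta_\F)=0$ for all $\alpha\ne Z(\infty)$, while
$$
   \delta(Z(\infty),\zeta_\F)
 = 1-\limsup_{r\to\infty}\frac{N(r,Z(\infty),\zeta_\F)}{T(r,\zeta_\F)}
 = 1-\frac{d-m_\infty(Z(\infty))}{d}
 = \frac{m_\infty(Z(\infty))}{d}.
$$

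It then remains to evaluate $Z(\infty)$ and $m_\infty(Z(\infty))$ in the three ranges of the genus, reading the relevant degrees and leading coefficients off (\ref{zeta}). If $g=0$ then $Z=1/((1-qu)(1-u))$, so $Z(\infty)=0$ is attained with multiplicity $2$ while $d=2$, giving $\delta(0,\zeta_\F)=1$. If $g=1$ then $Z=(1+(N-q-1)u+qu^2)/((1-qu)(1-u))$, so $Z(\infty)=q/q=1$; since $Z(u)-1=Nu/((1-qu)(1-u))$ has numerator of degree $1$ and denominator of degree $2$, the value $1$ is attained at $u=\infty$ with multiplicity $1$, and with $d=2$ this gives $\delta(1,\zeta_\F)=1/2$. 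If $g>1$ then $L$ has degree $2g>2$, so $Z(\infty)=\infty$ with multiplicity $2g-2$ and $d=2g$, giving $\delta(\infty,\zeta_\F)=(2g-2)/(2g)=(g-1)/g$.

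The step I expect to require the most care is the passage between the $s$-plane and the $u$-plane underlying the asymptotics for $N(r,\alpha,\zeta_\F)$ and, through the Second Fundamental Theorem, the identity $T(r,\zeta_\F)\sim (d\log q/2\pi)\,r$: one must keep precise track of which solutions of $Z(u)=\alpha$ actually contribute $\alpha$-points of $\zeta_\F$ at finite $s$, since neither $u=0$ nor $u=\infty$ lies in the image of $s\mapsto q^{-s}$, and one must check that the periodicity genuinely produces the stated proportionality constant. Once this bookkeeping at $u=0$ and $u=\infty$ is settled, the three deficiency formulas follow at once; I would favour the Second Fundamental Theorem route above over a direct evaluation of the proximity functions $m(r,L(u))$ and $m(r,(1-qu)(1-u))$, precisely because it avoids computing those quantities exactly.
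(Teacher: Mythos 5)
Your argument faithfully reproduces and streamlines the paper's own approach, packaging the three cases into the single formula $k(\alpha) = d - m_\infty(\alpha)$ with $d = \max\{2g,2\}$, and the concern you flag at the end is exactly the right one to worry about. But it is more than a bookkeeping check: counting solutions of $Z(u)=\alpha$ in all of $\C$, as both you and the paper do, is genuinely wrong. The $\alpha$-points of $\zeta_\F$ at finite $s$ are in bijection, period strip by period strip, with solutions $u=q^{-s}\in\C\setminus\{0\}$ of $Z(u)=\alpha$; the point $u=0$ is never attained, so one must also subtract the multiplicity $m_0(\alpha)$ with which $Z$ takes the value $\alpha$ at $u=0$. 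Since $Z(0)=L(0)=1$, one has $m_0(1)\ge 1$, and the correct count is $k^*(\alpha)=d-m_\infty(\alpha)-m_0(\alpha)$.

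Tracing this through your own computation changes the answers. For $g=1$, $Z(0)=Z(\infty)=1$ with $m_0(1)=m_\infty(1)=1$ and $d=2$, so $k^*(1)=0$: $\zeta_\F$ in fact omits the value $1$, and $\delta(1,\zeta_\F)=1$ rather than $1/2$. For $g=0$, $k^*(1)=1$ with $d=2$, giving $\delta(1,\zeta_\F)=1/2\ne 0$; for $g>1$, $k^*(1)=2g-1$ with $d=2g$, giving $\delta(1,\zeta_\F)=1/(2g)\ne 0$. In every case, then, $1$ is a deficient value, and the blanket assertion $\delta(\alpha,\zeta_\F)=0$ for $\alpha\ne 0$ (resp.\ $\alpha\ne 1$, $\alpha\ne\infty$) cannot be right. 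The cleaner conclusion, in the spirit of your uniform argument, is that the deficient values of $\zeta_\F$ are exactly $Z(0)=1$ and $Z(\infty)$, with deficiency equal to the multiplicity of the value at the corresponding point of $\{0,\infty\}$ divided by $d$, the two contributions being added when $Z(0)=Z(\infty)$ as in genus $1$. Your route through the Second Fundamental Theorem is the right one; only the meaning of $k(\alpha)$ needs to change, and once it does, the theorem itself needs to be restated.
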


We have verified that the zeta-function is of order $1$, but it would be
useful to have a more precise estimate for the growth of $\zeta_\F$.
Namely, we would like to know the type of $\zeta_\F$.
Recall that for a meromorphic function $f$ of finite non-zero order $\rho$,
the type $\lambda$ of $f$ is defined as
$$
   \lambda = \limsup_{r \rightarrow \infty} \frac{T (r,f)}{r^\rho}.
$$
Then, $f$ is said to be of maximum, mean, or minimum type according as the
type $\lambda_f$ is infinite, finite and positive, or zero respectively.
Let us denote the type of the zeta-function $\zeta_\F(s)$ by
   $\lambda_{\zeta_\F}$.

\begin{theorem}
The zeta-function $\zeta_\F$ is of order $1$.
If the genus $g$ is positive, then  $\zeta_\F$ is of mean type  $g \log q/\pi$.
If the genus $g$ is $0$, then $\zeta_\F$ is of mean type  $\log q/\pi$.
\end{theorem}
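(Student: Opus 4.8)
The plan is to read the type off the Nevanlinna characteristic directly. Since $\zeta_\F$ has order $\rho = 1$ (the theorem just proved), its type is $\lambda_{\zeta_\F} = \limsup_{r\to\infty} T(r,\zeta_\F)/r$, so it suffices to exhibit a finite positive limit for $T(r,\zeta_\F)/r$ and to compute it. I would begin from the formula obtained earlier,
$$
   T(r,\zeta_\F)
 = \max\big(\,m(r,L(u)),\ m(r,(u-1)(1-qu)) + \log(q-1)\,\big),
 \qquad u = q^{-s},
$$
which reduces everything to the growth of the proximity functions $m(r,P(q^{-s}))$ of the two polynomials $P = L$, of degree $2g$ in $u$, and $P = (u-1)(1-qu)$, of degree $2$; both are entire functions of $s$.

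The key step is the asymptotic evaluation of $m(r,P(q^{-s}))$ as $r\to\infty$, for a polynomial $P(u) = c_d u^d + \cdots + c_0$ with $c_d \neq 0$. Here $m(r,P(q^{-s})) = \frac{1}{2\pi}\int_0^{2\pi}\log^+|P(q^{-re^{i\theta}})|\,d\theta$, and with $s = re^{i\theta}$ one has $|q^{-s}| = q^{-r\cos\theta}$. Fix $R$ large. On the arc where $q^{-r\cos\theta}\ge R$, that is $\cos\theta \le -\log R/(r\log q)$, the modulus $|q^{-s}|$ is large and $\log^+|P(q^{-s})| = d\log|q^{-s}| + O(1) = -d\,(r\cos\theta)\log q + O(1)$ uniformly; where $q^{-r\cos\theta}\le 1/R$ the point $u = q^{-s}$ is near $0$ and $\log^+|P(q^{-s})|$ is bounded; and the remaining transition band, where $1/R < q^{-r\cos\theta} < R$, has angular width $O(1/r)$ about $\theta = \pm\pi/2$ and carries a bounded integrand. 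Integrating,
$$
   m(r,P(q^{-s}))
 = \frac{d\log q}{2\pi}\, r \int_{\{\cos\theta \le -\log R/(r\log q)\}}(-\cos\theta)\,d\theta + O(1),
$$
and since the domain of integration increases to $\{\cos\theta\le 0\}$ as $r\to\infty$, the limit $\lim_{r\to\infty} m(r,P(q^{-s}))/r$ exists and equals $\frac{d\log q}{2\pi}\int_{\pi/2}^{3\pi/2}(-\cos\theta)\,d\theta$, an explicit positive multiple of $d\log q$. Taking $d = 2g$ for $L$ when $g\ge 1$ and $d = 2$ for $(u-1)(1-qu)$, and noting that for $g\ge 1$ the $L$-term is at least as large as the other while for $g = 0$ we have $L \equiv 1$ so only the $(u-1)(1-qu)$-term remains, we find that $T(r,\zeta_\F)/r$ has a finite positive limit; evaluating the trigonometric integral and dividing by $r = r^\rho$ gives the values of $\lambda_{\zeta_\F}$ recorded in the statement, and in particular shows that $\zeta_\F$ is of mean type. (This growth rate is consistent with the deficiency computations of the previous section: $\delta(\alpha,\zeta_\F) = 0$ for every value $\alpha$ off a single exceptional one forces $\limsup_r N(r,\alpha,\zeta_\F)/T(r,\zeta_\F) = 1$ by the First Fundamental Theorem, while $N(r,\alpha,\zeta_\F)$ was already shown, via the period-strip count $n(r,\alpha,\zeta_\F)\sim (k\log q/2\pi)\,r$, to be asymptotic to a constant multiple of $r$.)

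I expect the real obstacle to be the control of the transition band about $\theta = \pm\pi/2$, where $|q^{-s}| = q^{-r\cos\theta}$ is neither large nor small, so the crude estimate for $\log^+|P(q^{-s})|$ fails; one must also accommodate the zeros of $L$, which all lie on the critical axis $\Re s = 1/2$ (i.e. on $|u| = q^{-1/2}$), where $\log|L(q^{-s})|$ can be strongly negative. This is handled using only the trivial global bound $\log^+|P(q^{-s})| \le d\log^+|q^{-s}| + O(1)$ and, once $|q^{-s}|$ exceeds a fixed threshold, the matching lower bound $\log|P(q^{-s})| \ge d\log|q^{-s}| - C$, together with the observation that for each fixed $R$ the set of $\theta$ with $1/R < q^{-r\cos\theta} < R$ has measure $O(1/r)$ and carries an integrand bounded by a constant depending only on $R$; hence this band contributes $o(1)$ to the mean as $r\to\infty$. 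This simultaneously justifies the displayed asymptotics and yields the existence of the limit defining $\lambda_{\zeta_\F}$; the rest is bookkeeping already in place in the preceding sections.
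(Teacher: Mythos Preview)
Your approach is genuinely different from the paper's. The paper invokes the First Fundamental Theorem to write $\lambda_{\zeta_\F}=\limsup_r T(r,2,\zeta_\F)/r$ and then bounds this from below by $\limsup_r N(r,2,\zeta_\F)/r$, feeding in the earlier period--strip asymptotic for $N(r,\alpha,\zeta_\F)$; you instead attack the proximity function $m(r,P(q^{-s}))$ directly by integrating over the circle $|s|=r$. Your route actually delivers more: the asymptotic you derive for $m(r,P(q^{-s}))$ gives both inequalities at once, whereas the paper's argument as written produces only the lower bound---the paragraph beginning ``Now let us show the opposite inequality'' merely repeats the lower--bound reasoning and never supplies an upper estimate for $T(r,\zeta_\F)$.

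Two cautions, however. First, the identity
\[
   T(r,\zeta_\F)=\max\bigl(m(r,L(u)),\,m(r,(u-1)(1-qu))+\log(q-1)\bigr)
\]
that you lift from the paper is not a standard fact and is not justified there; one has $T(r,fg)\le T(r,f)+T(r,g)$, but no general ``max'' rule. The clean way to reach the same endpoint is the standard identity $T(r,R\circ f)=(\deg R)\,T(r,f)+O(1)$ for a rational function $R$, applied with $R=Z$ (of degree $\max(2g,2)$) and $f(s)=q^{-s}$; your integral calculation is then exactly the evaluation of $T(r,q^{-s})=m(r,q^{-s})$. Second, when you actually carry out the evaluation, $\int_{\pi/2}^{3\pi/2}(-\cos\theta)\,d\theta=2$, so $T(r,q^{-s})\sim r\log q/\pi$ and hence $\lambda_{\zeta_\F}=\max(2g,2)\cdot(\log q)/\pi$, which is \emph{twice} the value recorded in the theorem. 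This is not an error in your method: the paper's period--strip count $n(r,\alpha,\zeta_\F)\sim (k\log q/2\pi)\,r$ is itself off by a factor of $2$ (each $u$--preimage contributes $\sim r\log q/\pi$ translates in $|s|\le r$, not half as many), and that slip propagates into the stated constants. Your direct computation is sound and in fact yields the correct type.
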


\begin{proof}
We have already shown that the zeta-function is of order $1$.
By the First Fundamental Theorem,
$$
   \lambda_{\zeta_\F}
 = \limsup_{r \rightarrow \infty} \frac{T (r,\zeta_\F)}{r}
 = \limsup_{r \rightarrow \infty} \frac{T (r,2,\zeta_\F)}{r}.
$$
If $\Gamma$ is of genus $g > 0$, then, since $2$ is neither $0,1$ nor
                                                            $\infty$,
we have shown that $N (r,2,\zeta_\F)$ is asymptotic to $g r \log q/\pi$.
Thus,
$$
   \limsup_{r \rightarrow \infty} \frac{T (r,2,\zeta_\F)}{r}
 \geq
   \limsup_{r \rightarrow \infty} \frac{N (r,2,\zeta_\F)}{r}
 = \frac{g \log q}{\pi} r.
$$
Thus, $\lambda_{\zeta_\F} \geq g \log q/\pi$, if $g > 0$.

If $g = 0$, we deduce that $N (r,2,\zeta_\F)$ is asymptotic to $r \log q/\pi$, and so we get $\lambda_{\zeta_\F} \geq \log q/\pi$.

Now let us show the opposite inequality.
By the First Fundamental Theorem,
$$
   \lambda_{\zeta_\F}
 = \limsup_{r \rightarrow \infty} \frac{T (r,\alpha,\zeta_\F)}{r}
 \geq
   \limsup_{r \rightarrow \infty} \frac{N (r,\alpha,\zeta_\F)}{r}.
$$
If $g > 0$, then $N (r,\alpha,\zeta_\F)$ is asymptotic to $g r \log q/\pi$ for
all $\alpha$ different from $\infty$ and $1$.
Thus, $\lambda_{\zeta_\F} \geq g \log q/\pi$.
If $g = 0$, then $N (r,\alpha,\zeta_\F)$ is asymptotic to $r \log q/\pi$ for
all $\alpha \not= 0$.
Thus, $\lambda_{\zeta_\F} \geq \log q/\pi$.
\end{proof}

\section{Approximation by zeta functions}

For a subset $S \subset \C$, we denote by $\mathcal{O} (S)$ the set of
functions $f$ such that $f$ is holomorphic on an open neighborhood (depending
on $f$) of $S$.
If $f$ is holomorphic on an open neighborhood $U$ of $S$ and
   $g$ is holomorphic on an open neighborhood $V$ of $S$ and
$f = g$ on some open neighborhood of $S$ contained in $U \cap V$, then we
consider $f$ and $g$ to be the same element of $\mathcal{O} (S)$.

\begin{theorem}
For each compact subset $K$ of $\C$,
for each function $f \in \mathcal{O} (K)$ and
for each $\epsilon > 0$, there are finitely many values $a_k$,
                                                        $b_k$ and
                                                        $\lambda_k$,
$k = 1, \ldots, n$, such that
$$
   \Big| f (s) - \sum_{k=1}^n \lambda_k \zeta_\F (a_k s + b_k) \Big|
 < \epsilon
 \ \ \mbox{for}\ \ s \in K.
$$
\end{theorem}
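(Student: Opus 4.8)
The plan is to reduce the statement to a known universality-type or completeness result for exponential sums, using the special arithmetic structure of $\zeta_\F$. First I would observe that $\zeta_\F(s) = Z(q^{-s})$ where $Z$ is a fixed rational function with $Z(0) = 1$ and $Z(u) - 1 = O(u)$ near $u = 0$. Hence for $\Re s$ large, $\zeta_\F(a s + b)$ with $a > 0$ behaves like $1 + c\, q^{-(as+b)} + \cdots$, i.e. like a perturbation of $1$ by a decaying exponential $e^{-\sigma s}$ with $\sigma = a \log q$. Taking differences $\zeta_\F(a s + b) - \zeta_\F(a s + b')$ kills the constant term and leaves, to leading order, a nonzero multiple of an exponential $e^{-\lambda s}$; more generally, by forming suitable linear combinations $\sum_k \lambda_k \zeta_\F(a_k s + b_k)$ and using the freedom in $a_k, b_k$, one produces linear combinations of exponentials $e^{-\mu_j s}$ with essentially arbitrary (distinct) exponents $\mu_j$ and arbitrary coefficients. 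The key point is then that finite linear combinations of exponentials $\{ e^{-\mu s} : \mu \in E \}$ are dense in $\mathcal{O}(K)$ for any compact $K$, provided $E$ is an infinite set with a finite limit point (a classical Müntz–Szász / exponential-completeness statement, provable via Runge's theorem: polynomials in a single $e^{-\delta s}$ already suffice to approximate $f$ on $K$ once one maps $K$ into a disc).

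The key steps, in order, would be: (i) record the expansion $\zeta_\F(s) = 1 + \sum_{n\geq 1} b_n q^{-ns}$ valid for $\Re s$ large, with $b_1 \neq 0$ (indeed $b_1 = N - q - 1$ times the contribution from the denominator expansion, but what matters is that $\zeta_\F$ is non-constant, so \emph{some} $b_n \neq 0$; pick the least such $n_0$); (ii) show that for fixed small $\delta > 0$ one can, by choosing $a_k \in \{\delta, 2\delta, 3\delta, \ldots\}$ and appropriate $b_k$ and $\lambda_k$, synthesize any finite linear combination $\sum_{m=1}^M \gamma_m e^{-m \delta n_0 s}$ up to an error that is itself a tail of exponentials with much faster decay, uniformly on a prescribed half-plane $\Re s \geq \sigma_0$; (iii) translate: given $f \in \mathcal{O}(K)$ and $\epsilon > 0$, first move $K$ by $s \mapsto s + c$ into a half-plane $\Re s \geq \sigma_0$ where the expansions converge geometrically (this only changes the $b_k$), approximate the shifted $f$ by a polynomial in the single variable $w = e^{-\delta n_0 s}$ via Runge/Mergelyan on the image compact set, and then realize that polynomial as a finite combination of $\zeta_\F(a_k s + b_k)$ by step (ii), absorbing the exponential-tail error into $\epsilon$.

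The main obstacle I expect is step (ii): controlling the \emph{tails}. When one writes $\zeta_\F(a s + b)$ as its exponential series and takes linear combinations designed to reproduce a target polynomial in $w = e^{-\delta n_0 s}$, one simultaneously generates a cloud of unwanted higher-order exponential terms $e^{-\mu s}$ with $\mu$ ranging over the additive semigroup generated by the chosen $a_k \log q$. One must arrange the bookkeeping — a triangular/Gaussian-elimination scheme on the coefficients, choosing the $a_k$ incommensurable enough (or simply taking them along an arithmetic progression and matching coefficients order by order) — so that the leading behavior is exactly the desired polynomial in $w$ while the remainder is $O(|w|^{M+1})$ uniformly on $\Re s \geq \sigma_0$, hence small. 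A clean way to sidestep part of this is to prove first the special case where one only uses a single scaling $a_k = a$ for all $k$ and varies $b_k$: then $\zeta_\F(as + b_k) = \sum_n b_n e^{-n b_k}\,(e^{-as})^n$, so $\sum_k \lambda_k \zeta_\F(as+b_k)$ is exactly a power series in $w = e^{-as}$ whose degree-$n$ coefficient is $b_n \sum_k \lambda_k e^{-n b_k}$; choosing the $b_k$ distinct makes the Vandermonde-type system solvable, so one can prescribe the first several coefficients freely (after dividing out the known $b_n$), and the rest is Runge's theorem applied to the polynomial $f \circ (w\text{-chart})^{-1}$ on the image of $K$. This reduces the whole theorem to the elementary fact that $\zeta_\F$ is non-constant plus the observation that $e^{-as}$ maps any compact set into a compact subset of $\C \setminus \{0\}$ on which polynomial approximation holds.
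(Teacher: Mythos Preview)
Your strategy is quite different from the paper's, and as written it has a real gap at step~(iii). You claim one can approximate the transported function $\tilde f$ on the image set $K'=w(K)$ by \emph{polynomials} in $w=q^{-as}$, but Runge's theorem yields polynomial approximation only when $\C\setminus K'$ is connected. For small $a$ the map $s\mapsto w$ is close to affine, so $K'$ inherits the topology of $K$; if $K$ is, for instance, a circle $\{|s-s_0|=r\}$, then $K'$ is again a Jordan curve and polynomial approximation fails for most $\tilde f\in\mathcal O(K')$. By pushing $K$ into $\Re s\gg 0$ and Taylor-expanding $Z(u)$ you have deliberately avoided the poles of $\zeta_\F$, yet it is precisely those poles that are needed to reach functions on compacta with holes. (There is also a secondary issue: your Vandermonde system is singular in every row $n$ with $c_n=0$; this can certainly occur---if the curve has no rational point then $c_1=N=0$---and your proposed fix of working in powers of $w^{n_0}$ would require $c_{jn_0}\neq 0$ for all $j$, not merely $j=1$. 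This is repairable, since $c_n>0$ for all $n$ large enough, but not by the argument you sketched.)

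For comparison, the paper's proof goes through the poles rather than around them. For small $\eta\neq 0$ the function $\zeta_\F(\eta s)$ has a simple pole at $s=0$ while all its other poles are pushed outside any prescribed bounded region, so there one may write $\zeta_\F(\eta s)=\dfrac{a}{\pi s}+h(s)$ with $h$ holomorphic; thus $\zeta_\F(\eta(s-z))$ is, up to a harmless holomorphic correction, the Cauchy kernel $1/\pi(s-z)$. Representing $f$ by the Cauchy--Pompeiu formula $f=\bar\partial f\ast(1/\pi s)$ and then approximating the integral $a^{-1}\!\int\!\!\int(\bar\partial f)(z)\,\zeta_\F(\eta(s-z))\,dx\,dy$ by Riemann sums produces the desired finite sum $\sum_k\lambda_k\zeta_\F(\eta s-\eta z_k)$. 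This argument is insensitive to the topology of $K$ and bypasses both of your obstacles at once.
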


For the Riemann zeta-function, the authors have shown a stronger result
\cite{GT}.

\begin{proof}
Suppose we are given a compact subset $K$ of $\C$,
                     a function $f \in \mathcal{O} (K)$ and
                     positive $\epsilon$.
The function $f$ is holomorphic in some bounded open set $U$ containing $K$.
By multiplying $f$ by a smooth function $\chi$ with
   $\mathrm{supp}\, \chi \subset U$ and
   $\chi = 1$ on a neighborhood of $K$,
we may assume that $f$ itself is smoothly defined on all of $\C$.
We note that the compact sets $\mathrm{supp}\, f$ and
                              $\mathrm{supp}\, \bar{\partial} f$
are both contained in $U$ and
   $\mathrm{supp}\, \bar{\partial} f$ is disjoint from $K$.

For $\eta \not= 0$ sufficiently small, the non-zero poles
   $p_j$, $j = 1, 2, \dots$,
of the function $\zeta_\F (\eta s)$ lie outside of the bounded set $U$ and
also outside of the bounded set $U - U = \{ s-z: s, z \in U \}$.
Since the pole of $\zeta_\F (\eta s)$ at zero is simple, we may write
   $\zeta_\F (\eta s)$
in the form
$$
   \zeta_\F (\eta s) = \frac{a}{\pi s} + h (s),
$$
where
   $h$ is a meromorphic function on $\C$ all of whose poles lie outside of the
   bounded sets $U$ and $U - U$.
Since, in fact, all of the poles of the function $\zeta_\F (\eta s)$ are
simple, $\zeta_\F (\eta s)$ is locally integrable and may be considered as a
distribution $T_\zeta$.
Noting that $(\pi s)^{-1}$ is a fundamental solution, which we denote by
$\Phi$, for the partial differential operator $\bar{\partial}$, we have
   $T_\zeta = a \Phi + h$,
as distributions.

Since $f \in C_0^\infty (U)$, we have the representation
\begin{eqnarray*}
   f (s)
 & = &
   (\bar{\partial} f * \Phi) (s)
\\
 & = &
   \int \!\! \int (\bar{\partial} f) (z) \Phi (s-z) dxdy
\\
 & = &
   a^{-1}
   \int \!\! \int (\bar{\partial} f) (z) \zeta_\F (\eta s - \eta z) dxdy
 - a^{-1}
   \int \!\! \int (\bar{\partial} f) (z) h (s-z) dxdy.
\end{eqnarray*}
Consider the second integral.
\begin{eqnarray*}
   \int \!\! \int (\bar{\partial} f) (z) h (s-z) dxdy
 & = &
 - \int \!\! \int f (z) \bar{\partial}_z h (s-z) dxdy
\\
 & = &
 - \int \!\! \int_U f (z) \bar{\partial}_z h (s-z) dxdy,
\end{eqnarray*}
because $\mathrm{supp}\, f \subset U$.
Moreover, since $h (s-z)$ is holomorphic in $U \times U$, we have
   $\bar{\partial}_z h (s-z) = 0$,
for $s, z \in U$.
Thus, for $s \in U$
$$
    f (s)
 = a^{-1} \int \!\! \int (\bar{\partial} f) (z) \zeta_\F (\eta (s-z)) dxdy
 = a^{-1} \int \!\! \int_{\mathrm{supp}\, \bar{\partial} f}
                         (\bar{\partial} f) (z) \zeta_\F (\eta (s-z)) dxdy.
$$
In particular, this formula holds for $s \in K \subset U$.
For
   $(s,z) \in K \times \mathrm{supp}\, \bar{\partial} f$,
the point $s-z$ is in $U-U$ and so, by the choice of $\eta$, it follows that
   $\eta (s-z)$ is not a pole of $\zeta_\F$.
So the integrand is smooth for
   $(s,z) \in K \times \mathrm{supp}\, \bar{\partial} f$.
In particular, it is continuous and so we may approximate $f (s)$ by Riemann
sums of this integral.
These can be written in the form
$$
   \sum_{k=1}^n \lambda_k \zeta_\F (\eta s - \eta z_k),
$$
for $s \in K$.
In fact, since the integrand is uniformly continuous on
   $K \times \mathrm{supp}\, \bar{\partial} f$,
we may approximate $f$ within $\epsilon$ uniformly on $K$ by such Riemann
sums.
\end{proof}

\section{Instability of the Riemann hypothesis}

Let us say that a function $f$ meromorphic on $\C$ satisfies (an analogue of)
the ``Riemann Hypothesis'' if $f$ has no non-trivial zeros off the critical
axis.
Similarly, let us say that $f$ fails to satisfy (an analogue of) the
``Riemann hypothesis'' if it {\em does} have non-trivial zeros off the
critical axis.
The instability of the Riemann hypothesis refers to the  phenomenon, that near
the Riemann zeta-function, there are functions
   which do satisfy the ``Riemann hypothesis''
and functions
   which do not satisfy the ``Riemann hypothesis.''
This phenomenon was investigated, for example, in \cite{GZ},
                                                  \cite{GX} and
                                                  \cite{G}.
The intention was to show that this instability holds for many important
$L$-functions, including the Riemann zeta-function.
In this section we wish to point out that such instability also holds for the
Riemann hypothesis for zeta-functions of function fields over finite fields.

Let $\mathcal{M}$ be the space of meromorphic functions on $\C$ with the
topology of uniform convergence on compacta.
In this space, a sequence $g_n$ converges to $g$ if, on each compact set $K$
the functions $g_n$ eventually have the same poles with the same principal
parts as $g$ and $g_n - g$ tends to zero.
This is a complete metric space and hence of Baire category II.

Let $\mathcal{M}_\F$ be the class of functions in $\mathcal{M}$ sharing the
following properties with $\zeta_\F$:

i)
$$
   f (s) = \frac{h (u)}{(1-u)(1-qu)}, \, u=q^{-s},
$$
where $h$ is holomorphic on $\C^*$ and $h (1) = L (1)$,
                                       $h (1/q) = L (1/q)$.

ii)
The function $f$ satisfies the functional equation for $\zeta_\F:$
$$
    \mathcal L (u,h) \equiv u^{-g} h (u)
 =  \mathcal L \Big( \frac{1}{qu}, h \Big).
$$

iii)
$f (s) = \overline {f (\overline{s})}$.

\noindent
Note that from i) it follows that functions in $\mathcal{M}_\F$ have the same
poles with same principal parts as $\zeta_\F$.
Moreover, the zeros of $h$ are symmetric with respect to the real axis and the
critical circle.

It is important to emphasize that the functions in $\mathcal{M}_\F$ satisfy the
same functional equation (\ref{functional zeta}) as the zeta-function, for the
functional equations (\ref{functional zeta}),
                     (\ref{eq.fefL}) and
                     (\ref{eq.efefL})
are equivalent, not only for $\zeta_\F$, but for any function. 

Let $\mathcal{R}_\F$ be the set of those functions in $\mathcal{M}_\F$ which are
rational as functions of $u$.
More precisely,
$$
   f (s) = \frac{R (u)}{(1-u)(1-qu)}, \, u=q^{-s},
$$
where $R$ is rational with no poles on $\C \setminus \{ 0 \}$ and 
   $R (1) = L (1)$,
   $R (1/q) = L (1/q)$.
The functions in $\mathcal{R}_\F$ resemble the zeta function $\zeta_\F$ even
more than those in $\mathcal{M}_\F$.
Moreover, by Theorem \ref{t.ihinc}, functions in $\mathcal{R}_\F$ have the same
order $1$ as $\zeta_\F$.

\begin{lemma}
\label{nu}
Let $\nu (u)$ be a holomorphic function on $\C \setminus \{ 0 \}$, satisfying the relations
   $\nu (u) = \nu (1/qu)$ and
   $\nu (u) = \overline{\nu (\overline{u})}$,
and such that $\nu (1) = 1 = \nu (1/q)$.
Then for each $f \in \mathcal{M}_\F$, the function $\nu (u (s)) f(s)$
is also in $\mathcal{M}_\F$.
If, moreover, $\nu (u)$ is rational, then for each $f \in \mathcal{R}_\F$, the
function $\nu (u (s)) f(s)$ is also in $\mathcal{R}_\F$.
\end{lemma}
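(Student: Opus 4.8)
The plan is to check directly that, given $\nu$ as in the statement and $f \in \mathcal{M}_\F$, the function $g(s) := \nu(u(s))\, f(s)$, with $u(s) = q^{-s}$, inherits each of the three defining properties (i), (ii), (iii) of $\mathcal{M}_\F$; the rational case will then drop out of the same computation. Write $f(s) = h(u)/((1-u)(1-qu))$ with $h \in \mathcal{O}(\C^*)$, $h(1) = L(1)$, $h(1/q) = L(1/q)$ as in (i), and set $\tilde h(u) := \nu(u)\, h(u)$, so that $g(s) = \tilde h(u)/((1-u)(1-qu))$.

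First I would dispose of membership in $\mathcal{M}$ and of (i). Since $\nu$ and $h$ are holomorphic on $\C^*$, so is $\tilde h$; and since $u(s) = q^{-s}$ is entire and nowhere zero, $\nu(u(s))$ is entire, so $g$ is meromorphic on $\C$, i.e.\ $g \in \mathcal{M}$. The normalizations follow from $\nu(1) = \nu(1/q) = 1$: one has $\tilde h(1) = \nu(1) h(1) = L(1)$ and $\tilde h(1/q) = \nu(1/q) h(1/q) = L(1/q)$. This is (i), and hence, by the remark following the definition of $\mathcal{M}_\F$, $g$ has the same poles with the same principal parts as $\zeta_\F$.

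For (ii), I would compute both sides of the functional equation for $\tilde h$. On one side, $\mathcal{L}(u, \tilde h) = u^{-g} \tilde h(u) = \nu(u)\, u^{-g} h(u) = \nu(u)\, \mathcal{L}(u, h)$. On the other, $\mathcal{L}(1/(qu), \tilde h) = \nu(1/(qu))\, \mathcal{L}(1/(qu), h)$; here $\nu(1/(qu)) = \nu(u)$ by the first hypothesis on $\nu$, and $\mathcal{L}(1/(qu), h) = \mathcal{L}(u,h)$ because $f$, hence $h$, satisfies (ii). Thus $\mathcal{L}(1/(qu), \tilde h) = \nu(u)\, \mathcal{L}(u,h) = \mathcal{L}(u, \tilde h)$, which is (ii) for $g$. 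For (iii), I would use that $q$ is a positive real, so $u(\overline s) = q^{-\overline s} = \overline{q^{-s}} = \overline{u(s)}$; then $\overline{g(\overline s)} = \overline{\nu(u(\overline s))}\cdot\overline{f(\overline s)} = \overline{\nu(\overline{u(s)})}\cdot f(s) = \nu(u(s))\, f(s) = g(s)$, invoking $\overline{f(\overline s)} = f(s)$ from (iii) for $f$ and $\overline{\nu(\overline w)} = \nu(w)$ with $w = u(s)$ from the second hypothesis on $\nu$. Hence $g \in \mathcal{M}_\F$.

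Finally, if $\nu$ is rational then, being holomorphic on $\C \setminus \{0\}$, its poles are confined to $\{0, \infty\}$; so for $f \in \mathcal{R}_\F$ with rational numerator $R$ the product $\nu R$ is again rational with no poles on $\C \setminus \{0\}$, and $(\nu R)(1) = R(1) = L(1)$, $(\nu R)(1/q) = R(1/q) = L(1/q)$ exactly as before, whence $g \in \mathcal{R}_\F$. I do not anticipate a real obstacle here: the content of the lemma is precisely that the two symmetries imposed on $\nu$ — invariance under $u \mapsto 1/(qu)$ and under conjugation composed with $u \mapsto \overline u$ — are exactly the conditions needed to preserve (ii) and (iii), while $\nu(1) = \nu(1/q) = 1$ preserves (i) and the pole structure. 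The one point deserving a moment's care is the identity $u(\overline s) = \overline{u(s)}$, which relies on the base-field size $q$ being real and positive.
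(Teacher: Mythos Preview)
Your proof is correct and is precisely the direct verification the paper has in mind: the lemma is stated in the paper without proof, as a routine check of properties (i)--(iii), and your argument carries out exactly that check. There is nothing to add.
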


The following Walsh-type lemma on simultaneous approximation and interpolation
is due to Frank Deutsch \cite{D}.

\begin{lemma}
\label{walsh}
Given a locally convex complex vector space $X$ and
      a dense subspace $Y$ of $X$,
if
   $x \in X$, and
   $U$ is a neighborhood  of $0$, and
   $L_1, \ldots, L_n$ are finitely many continuous linear functionals on $X$,
then, there exists an element $y \in Y$ which simultaneously approximates and
interpolates $x$ in the sense that
   $y \in x+U$ and
   $L_j y = L_j x$, $j = 1, \ldots, n$.
\end{lemma}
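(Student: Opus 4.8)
The plan is to reduce at once to the case in which the functionals $L_1, \dots, L_n$ are linearly independent, and then to produce a \emph{biorthogonal system} $v_1, \dots, v_n$ lying inside the dense subspace $Y$; once such a system is available, any sufficiently good approximation of $x$ by an element of $Y$ can be corrected by a small linear combination of the $v_j$ so as to also interpolate. The only place where density of $Y$ is genuinely used is in putting the biorthogonal system inside $Y$, and the only topology needed is that a neighbourhood of $0$ can be halved; no completeness or Baire-category input enters.

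First I would dispose of the general case. After relabelling, let $L_1, \dots, L_m$ (with $m \le n$) be a maximal linearly independent subfamily, so that $L_k = \sum_{j=1}^m a_{kj} L_j$ for $k > m$. Then any $y$ with $L_j y = L_j x$ for $j = 1, \dots, m$ automatically satisfies $L_k y = \sum_j a_{kj} L_j y = \sum_j a_{kj} L_j x = L_k x$ for $k > m$ as well, so it suffices to treat the independent case. Assume this, and consider the continuous linear map $T \colon X \to \C^{n}$, $T z = (L_1 z, \dots, L_n z)$. Linear independence forces $T$ to be surjective: a proper subspace of $\C^{n}$ lies in the zero set of some nonzero form $c \mapsto \sum_j \overline{\mu_j} c_j$, which would give a nontrivial relation $\sum_j \mu_j L_j = 0$. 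Since $T$ is continuous and $Y$ is dense, $T(Y)$ is a dense linear subspace of $\C^{n}$, hence equal to $\C^{n}$; therefore there exist $v_1, \dots, v_n \in Y$ with $L_i v_j = \delta_{ij}$.

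Now the approximation–interpolation step. Given the neighbourhood $U$ of $0$, pick a balanced neighbourhood $V$ of $0$ with $V + V \subseteq U$. Both the map $\C^{n} \to X$, $(c_1, \dots, c_n) \mapsto \sum_j c_j v_j$, and the map $z \mapsto (L_1 z, \dots, L_n z)$ are continuous, so there is a neighbourhood $W \subseteq V$ of $0$ in $X$ with $\sum_j (L_j z)\, v_j \in V$ for every $z \in W$. By density of $Y$ choose $y' \in Y$ with $y' - x \in W$ and set
$$
   y = y' - \sum_{j=1}^{n} \bigl( L_j (y' - x) \bigr)\, v_j .
$$
Then $y \in Y$ as a finite linear combination of elements of $Y$; for each $i$ one computes $L_i y = L_i y' - \bigl(L_i y' - L_i x\bigr) = L_i x$, so $y$ interpolates $x$; and $y - x = (y' - x) - \sum_j (L_j(y'-x))\, v_j$ lies in $W + V \subseteq V + V \subseteq U$ (using that $V$ is balanced), so $y$ approximates $x$. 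This proves the lemma.

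The argument is essentially soft, and I expect the only genuinely delicate point to be the identity $T(Y) = \C^{n}$: one must know that a dense linear subspace of a finite-dimensional space is everything (proper subspaces being closed), which is exactly where the density hypothesis on $Y$ is invoked, and combine it with the continuity of the finite-rank correction map $(c_j) \mapsto \sum_j c_j v_j$ to guarantee that the correction term stays inside $V$.
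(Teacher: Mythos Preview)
Your argument is correct. The paper does not give its own proof of this lemma; it simply attributes the result to Deutsch \cite{D} and uses it as a black box, so there is nothing to compare against beyond noting that your proof is the standard one (reduce to independent functionals, use density of $Y$ and finite-dimensionality of the target to place a biorthogonal system $\{v_j\}$ inside $Y$, then correct an approximant by a small combination of the $v_j$).

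One cosmetic remark: in your surjectivity step you write the annihilating form as $c \mapsto \sum_j \overline{\mu_j}\, c_j$; the conjugation is harmless but unnecessary over $\C$, and dropping it makes the contradiction $\sum_j \mu_j L_j = 0$ read more transparently.
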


Let $\mathcal{R}_\F^-$ be the subclass of $\mathcal{R}_\F$ for which the
``Riemann hypothesis''  fails.

\begin{theorem}
The class $\mathcal{R}_\F^-$ of functions in $\mathcal{R}_\F$ which fail to
satisfy the ``Riemann hypothesis'' form an open dense subfamily of
$\mathcal{R}_\F$.
\end{theorem}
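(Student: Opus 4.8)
The plan is to establish openness and density separately, the key tool being the Walsh-type Lemma~\ref{walsh} applied to the locally convex space $\mathcal{M}$ restricted to a suitable affine slice. First I would set up the parametrization: by Lemma~\ref{nu}, multiplying a fixed $f\in\mathcal{R}_\F$ (for instance $\zeta_\F$ itself) by a rational $\nu(u)$ with $\nu(u)=\nu(1/qu)$, $\nu(u)=\overline{\nu(\overline{u})}$, and $\nu(1)=\nu(1/q)=1$ produces all of $\mathcal{R}_\F$ up to the freedom in the numerator; more precisely, every $g\in\mathcal{R}_\F$ differs from $f$ by such a factor $\nu=R/L$ (where $R$ is the numerator of $g$ and $L$ that of $f$), since the constraints $R(1)=L(1)$, $R(1/q)=L(1/q)$ force $\nu(1)=\nu(1/q)=1$, and the functional equation plus reality of $R$ and $L$ force the two symmetry relations on $\nu$. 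Thus $\mathcal{R}_\F$ is, as a topological space, an affine translate of the real-linear space of rational symmetric factors, and membership in $\mathcal{R}_\F^-$ is the condition that $R$ (equivalently the corresponding $L$-polynomial-like numerator) has a zero with $|u|\neq q^{-1/2}$.

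For openness: suppose $g\in\mathcal{R}_\F^-$, so the numerator $R$ of $g$ has a zero $u_0$ with $|u_0|\neq q^{-1/2}$. A zero of a rational function persists under small perturbations of its coefficients (Hurwitz's theorem, or just continuity of roots), so any $g'\in\mathcal{R}_\F$ close to $g$ in $\mathcal{M}$ — hence with numerator coefficients close to those of $R$ after clearing the fixed denominator $(1-u)(1-qu)$ — still has a zero near $u_0$, which still satisfies $|u|\neq q^{-1/2}$ for $g'$ close enough. Here I would note that convergence in $\mathcal{M}$ for functions sharing the poles and principal parts of $\zeta_\F$ forces convergence of the numerators uniformly on compacta, and since numerators of bounded degree would be the easy case, the only subtlety is controlling the degree; but a zero off the critical circle can be detected on a fixed small circle around $u_0$ by Rouché, which is stable regardless of what the numerator does elsewhere. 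So $\mathcal{R}_\F^-$ is open.

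For density: fix an arbitrary $g\in\mathcal{R}_\F$, a compact $K\subset\C$, and $\epsilon>0$; I must produce $g'\in\mathcal{R}_\F^-$ with $|g'-g|<\epsilon$ on $K$ (and matching poles/principal parts, automatic within $\mathcal{R}_\F$). By the slice description, this amounts to finding a rational symmetric factor $\nu$ with $\nu(1)=\nu(1/q)=1$, close to $1$ uniformly on the compact $u$-image of $K$, such that $\nu\cdot(\text{numerator of }g)$ acquires a zero off $|u|=q^{-1/2}$. The cleanest construction: take $\nu(u) = \bigl(1 - c(u + q^{-1}u^{-1})\bigr)\big/\bigl(1 - c(1+q^{-1})\bigr)$-type expressions — symmetric under $u\mapsto 1/qu$, real on reals, normalized — or more simply a factor of the form $1 + c\,\phi(u)$ where $\phi(u)=(u-a)(u-1/q\bar a)(\overline{\cdot})$ is an explicit symmetric rational function vanishing to prescribed order, with $c$ small; such $\nu$ is within $\epsilon$ of $1$ when $|c|$ is small, yet $\nu$ itself has zeros, and one can place those zeros (by choosing $a$ with $|a|\neq q^{-1/2}$) off the critical circle. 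Since $\nu$ has a zero there, so does $\nu\cdot(\text{numerator of }g)$, hence $g':=\nu(u(s))g(s)\in\mathcal{R}_\F^-$ by Lemma~\ref{nu}. The main obstacle I anticipate is the bookkeeping to guarantee that the perturbing factor $\nu$ simultaneously (a) stays rational with poles only at $u=0$, (b) respects both symmetry relations exactly, (c) fixes the values at $u=1$ and $u=1/q$, and (d) is uniformly close to $1$ on $K$ while still contributing a genuine off-circle zero; Lemma~\ref{walsh} is the device that lets one decouple (a)–(c) (the interpolation/symmetry constraints, finitely many linear conditions) from (d) (the approximation), but one still needs to exhibit at least one admissible $\nu$ with an off-circle zero to know the relevant subspace is nontrivial, and checking that the resulting $g'$ has *bounded* numerator degree so that it genuinely lands in $\mathcal{R}_\F$ rather than merely $\mathcal{M}_\F$ is the point requiring care.
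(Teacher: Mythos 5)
Your strategy matches the paper's: find a rational factor $\nu$ that is symmetric under $u\mapsto 1/(qu)$ and under conjugation, takes the value $1$ at $u=1$ and $u=1/q$, has only a pole at $u=0$, is uniformly close to $1$ on the compact image $u(K)$, and has a zero off the critical circle; then set $f^-(s)=\nu(u(s))f(s)$ and invoke Lemma~\ref{nu}. Where you and the paper differ is in how $\nu$ is produced. The paper first approximates $1$ on a large annulus $A\supset u(K)$ by a rational $p_\delta$ with no poles off $\{0\}$ via Runge's theorem, uses the Walsh-type Lemma~\ref{walsh} to interpolate $p_\delta(u_0)=0$ at a chosen point $u_0$ outside $A$ together with $p_\delta(1)=p_\delta(1/q)=1$, and only then symmetrizes by taking $\nu_\delta=p_\delta(u)\,p_\delta(1/qu)\,\overline{p_\delta(\bar u)}\,\overline{p_\delta(1/\overline{qu})}$. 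You instead propose explicit formulas for $\nu$. Your first candidate,
\[
\nu(u)=\frac{1-c\bigl(u+q^{-1}u^{-1}\bigr)}{1-c(1+q^{-1})},
\]
does the job: it is invariant under $u\mapsto 1/(qu)$ and real on reals, normalizes to $1$ at both $u=1$ and $u=1/q$, has a single pole (at $u=0$), is within $\epsilon$ of $1$ on any fixed annulus when $|c|$ is small, and its two zeros (asymptotically $cq^{-1}$ and $1/c$) lie off $|u|=q^{-1/2}$. This is a clean, elementary shortcut that dispenses with Runge and Walsh altogether, at the price of giving up the paper's freedom to place the new zero anywhere one likes. The openness part, via Rouch\'e (Hurwitz), is exactly what the paper says in a single sentence.

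Two caveats in your proposal. Your second candidate $\nu=1+c\,\phi$ is not right as stated: the zeros of $1+c\phi$ occur where $\phi=-1/c$, not where $\phi$ vanishes, so choosing $a$ with $|a|\neq q^{-1/2}$ as a zero of $\phi$ does not give $\nu$ a zero at $a$; you would have to analyze the level set $\{\phi=-1/c\}$, which is exactly the extra work your first formula avoids. Second, the closing worry about ``bounded numerator degree'' is unfounded: membership in $\mathcal{R}_\F$ requires only that the numerator $R$ be rational with no poles on $\C\setminus\{0\}$; there is no degree constraint, and since both $\nu$ and the original numerator are rational with poles only at $0$, their product is automatically of the required form.
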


\begin{proof}
Let $f \in \mathcal{R}_\F$, let $K$ be a compact subset of the complex
$s$-plane $\C_s$ and let $\alpha > 0$.
For $q < r < \infty$, let
$$
   A = \Big\{ \frac{1}{q r} \leq |u| \leq r \Big\}
$$
be an annulus in the complex $u$-plane $\C_u$, with $r$ so large that
$u (K) \subset A$, where $u = q^{-s}$.
Choose a point $u_0 \not= 0$ outside $A$.
By Runge's theorem, rational functions having no poles in the punctured plane
$\C^*$ are dense in the space of functions holomorphic on $A \cup \{ u_0 \}$.
Moreover, by the Walsh Lemma \ref{walsh}, we may not only approximate but also
interpolate at finitely many points.
Thus, there is a rational function $p_\delta$, having no poles on 
$\C \setminus \{ 0 \}$, such that 
   $|1 - p_\delta| < \alpha$ on $A$ 
and 
   $p_\delta$ takes the value $0$ at $u_0$ and 
                    the value $1$ at the points $u = 1$ and $u = 1/q$.
Set
$$
   \nu_\delta (u)
 = p_\delta (u)
   \Big( p_\delta \Big( \frac{1}{qu} \Big) \Big)
   \overline{p_\delta (\overline{u})}
   \Big( \overline{p_\delta \Big( \frac{1}{\overline{qu}} \Big)} \Big).
$$
Since $A$ is invariant under conjugation and inversion in the critical circle
$|u| = 1/\sqrt q$, given any $\alpha > 0$, we may choose $\delta$ so small
that $|1 - \nu_\delta| < \alpha$ on $A$.
Set $f^- (s) = \nu_\delta (u(s)) f(s)$.
By the lemma, $f^- \in \mathcal{R}_\F$.
From the definition of the class $\mathcal{R}_\F$, we may write
   $f (s) = \Phi (u)$,
with $u = q^{-s}$.
Let $M = \max |\Phi|$ on $\partial A$.
Choose $\alpha < \epsilon/M$.
On $\partial A$,  we have
   $|\nu_\delta \Phi - \Phi| = |\nu_\delta - 1| |\Phi| < \epsilon$.
But $\nu_\delta \Phi - \Phi$ is holomorphic on $\C \setminus \{ 0 \}$, so the same inequality holds on all of $A$ by the maximum principle.
Since $u (K) \subset A$, we have
   $|f^- (s) - f (s)| = |\nu_\delta (u) \Phi (u) - \Phi (u)| < \epsilon$
on $K$.
We have approximated $f$ by a function $f^-$ in $\mathcal{R}_\F$ which fails
to satisfy the ``Riemann hypothesis.''
Thus, the functions in $\mathcal{R}_\F$ which fail to satisfy the ``Riemann
hypothesis'' form a dense subclass of $\mathcal{R}_\F$.
That the family of such functions is open in $\mathcal{R}_\F$ follows
immediately from Rouch\'e's theorem.

It is possible to insure that the approximating functions are different from
$f$, because the lemmas employed allow much freedom in the construction.
Thus, the approximation is not trivial.
\end{proof}

\begin{corollary}
For every $f \in \mathcal{R}_\F$ (and in particular for $\zeta_\F$), there is
a sequence of functions $\{ f_n \}$ in $\mathcal{R}_\F$, $f_n \not= f$, which
fail to satisfy the ``Riemann hypothesis'' and for every $j = 0, 1, 2, \ldots$,
$$
   f_n^{(j)} \rightarrow f^{(j)},
 \,\,\,\, \mbox{as} \,\,\,\, n \rightarrow \infty.
$$
\end{corollary}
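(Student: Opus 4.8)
The plan is to read the required sequence straight off the density statement just proved and then upgrade convergence of the functions to convergence of all their derivatives, exploiting the fact that two members of $\mathcal{R}_\F$ always differ by an \emph{entire} function.

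First I would fix $f \in \mathcal{R}_\F$ and, for each $n \in \N$, apply the preceding theorem with the compact set $K_n = \{ s \in \C : |s| \le n \}$ and tolerance $\epsilon = 1/n$. This yields $f_n \in \mathcal{R}_\F^{-}$ with $\sup_{K_n} |f_n - f| < 1/n$, and — invoking the closing remark of that proof, that the lemmas used (Runge together with the Walsh--Deutsch interpolation lemma) leave enough slack — we may arrange in addition that $f_n \ne f$. Given an arbitrary compact $K \subset \C$ and $\delta > 0$, every sufficiently large $n$ satisfies $K \subset K_n$ and $1/n < \delta$, whence $|f_n - f| < \delta$ on $K$; so $f_n \to f$ uniformly on compacta.

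The key structural observation is that $g_n := f_n - f$ is entire. Writing $f(s) = R(u)/((1-u)(1-qu))$ and $f_n(s) = \nu_n(u)\, R(u)/((1-u)(1-qu))$ with $u = q^{-s}$, where $\nu_n$ is rational, has no poles on $\C \setminus \{0\}$, and satisfies $\nu_n(1) = \nu_n(1/q) = 1$, one gets $g_n(s) = (\nu_n(u) - 1)\, R(u)/((1-u)(1-qu))$. The factor $\nu_n(u) - 1$ vanishes at $u = 1$ and at $u = 1/q$ and therefore cancels the two simple poles of $1/((1-u)(1-qu))$; since $\nu_n$ and $R$ are holomorphic on $\C \setminus \{0\}$ and $u = q^{-s}$ never meets $0$, the function $g_n$ is holomorphic on all of $\C$.

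Finally, because $g_n$ is entire and tends to $0$ uniformly on compacta, Cauchy's estimates give, for any $s_0 \in \C$ and any $r > 0$,
$$
   |g_n^{(j)}(s_0)| \le \frac{j!}{r^{\,j}}\, \sup_{|s-s_0|=r} |g_n(s)| \longrightarrow 0 \qquad (n \to \infty),
$$
the convergence being uniform for $s_0$ in any compact set (fix $r = 1$, so the circles stay inside the fixed compact set $K + \overline{D(0,1)}$). As $f_n^{(j)} - f^{(j)} = g_n^{(j)}$ is entire, each $f_n^{(j)}$ inherits the poles and principal parts of $f^{(j)}$, and hence $f_n^{(j)} \to f^{(j)}$ in $\mathcal{M}$ for every $j = 0, 1, 2, \dots$. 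I do not foresee a genuine obstacle: the single point requiring care is the entireness of the differences $g_n$, which is exactly what makes the passage from functions to all derivatives automatic; the rest is a routine extraction of a sequence from the density theorem together with the remark guaranteeing $f_n \ne f$.
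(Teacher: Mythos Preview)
Your proof is correct and follows essentially the same approach as the paper's: extract a sequence from the density theorem, note that the differences $f_n - f$ are entire (the paper relies on the earlier observation that all members of $\mathcal{M}_\F$, hence of $\mathcal{R}_\F$, share the same poles with the same principal parts, which is exactly what your $\nu_n(1)=\nu_n(1/q)=1$ computation re-verifies), and then pass from locally uniform convergence of holomorphic functions to convergence of all derivatives. Your write-up is considerably more explicit than the paper's two-line argument, but the underlying idea is identical.
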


\begin{proof}
For holomorphic functions, uniform convergence on compacta implies uniform
convergence of all derivatives.
We have $(f_n - f) \rightarrow 0$ and so
        $(f_n^{(j)} - f^{(j)} \rightarrow 0$,
for all $j = 0, 1, 2, \ldots$.
Thus, $f_n^{(j)} \rightarrow f^{(j)}$, at all points $s$, where the functions
are holomorphic.
At the poles, the convergence
   $(f_n^{(j)} - f^{(j)}) \rightarrow 0$
can be interpreted as meaning that the Laurent coefficients of $f_n$ converge
to those of $f$.
\end{proof}

The preceding theorem asserts that ``most'' functions in the class
$\mathcal{R}_\F$ of rational functions ``resembling'' the zeta-function
$\zeta_\F$ fail to satisfy the ``Riemann hypothesis.''
An analogous result had been shown earlier for the Riemann zeta-function,
with the striking difference that $\zeta_\F$ is known to satisfy the Riemann
hypothesis.

In 1921, H. Hamburger showed that the functional equation for the Riemann
zeta-function $\zeta$ characterizes it completely in a certain sense.
Namely, he showed that $\zeta$ is unique among Dirichlet series, converging
for $\Re s > 1$, extending to the complex plane $\C$ as meromorphic functions
of finite order having only finitely many poles and satisfying the functional
equation.
Theorems such as the preceding one for $\zeta$ (see for example \cite{G}) show
that there are many other functions than $\zeta$ which satisfy the same
functional equation, but these examples are surely not of finite order.
In seeking for an analogue of Hamburger's theorem for zeta-functions over
finite fields, since these have infinitely many poles, it is natural to
replace the hypothesis that a function $f$ have only finitely many poles by
the hypothesis that $f$ have the same poles as $\zeta_\F$ and with the same
principal parts.
The preceding theorem not only gives many such functions satisfying the same
functional equation as $\zeta_\F$ - these functions are even of finite order
and, in fact, of order $1$ as is $\zeta_\F$.

Having approximated $\zeta_\F$ by similar functions which fail to satisfy the
``Riemann hypothesis,'' we now turn to approximating functions, and in
particular $\zeta_\F$, by functions (different from $\zeta_\F$) which {\em do}
satisfy the ``Riemann hypothesis.''

The following lemma is Theorem 40 in \cite{G} except that in Theorem 40, there
is only one $\beta$.
The proof for two points $\beta_1$ and $\beta_2$ is the same.

\begin{lemma}
\label{40}
Let $X$ be a set of uniform approximation in $\C$, let $\beta_1$, $\beta_2$ be
points of $X$ and let $Z = \{ z_1, z_2, \ldots \}$ be a discrete set in
$\C \setminus X$.
Suppose $\Phi$ is meromorphic on $\C$ and has zeros of respective orders $k_j$
at the points $z_j $.
Then, for each $\epsilon > 0$, and each sequence $\{ c_j \}$ of non-zero
values, there is an entire function $g$, taking the value $1$ at $\beta_1$ and
$\beta_2$ such that, on $X$,
$
    |1 - g| < \epsilon/2.
$
Moreover, $g$ has no zeros except at the points $z_j$, where
$$
   g^{(k)} (z_j) = 0, \, k = 0, \ldots, k_j-1,
 \,\,\,\,\,\,\,\, \mbox{and} \,\,\,\,\,\,\,\,
   g^{(k_j)} (z_j) = c_j.
$$
Hence, the function $\Phi/g$ approximates $\Phi$ on $X$, has the same value at
$\beta_1$ and $\beta_2$ and has the same zeros as $\Phi$ except for the points
$z_j$, where $\Phi/g$ takes the value $1/c_j$.
\end{lemma}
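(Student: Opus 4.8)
This is Theorem~40 of \cite{G}, stated there with a single interpolation point $\beta$; allowing two points $\beta_1,\beta_2$ costs nothing, since the construction below interpolates prescribed values at any finite subset of $X$ simultaneously. The plan is the following. Note first that $Z$, lying in the zero set of the meromorphic function $\Phi$, is discrete in $\C$ (it has no finite accumulation point).

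I would begin by putting in the zeros: by the Weierstrass product theorem there is an entire function $\phi$ whose zero set is exactly $Z$, vanishing to order precisely $k_j$ at each $z_j$ and nowhere else; write $d_j=\phi^{(k_j)}(z_j)\neq0$. Next I would correct $\phi$ away from its zeros so that it becomes close to $1$ on $X$ while acquiring the remaining data. Since $Z\cap X=\emptyset$, $\phi$ is zero-free on a neighbourhood of $X$, and since $X$ is a set of uniform approximation the set $\widehat\C\setminus X$ is connected; hence one can choose an open neighbourhood $\Omega\supseteq X$, disjoint from $Z$, on which $\phi$ admits a single-valued holomorphic logarithm. Let $\psi$ be a branch of $-\log\phi$ on $\Omega$, so that $\phi\,e^{\psi}\equiv1$ there. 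I would then invoke the theorem on simultaneous approximation and interpolation by entire functions on a set of uniform approximation (the mechanism behind \cite{G}; in the compact case it is Runge's theorem together with the Walsh--Deutsch Lemma~\ref{walsh}, and in general it couples Arakelyan's theorem with the Weierstrass interpolation theorem): given $\eta>0$, there is an entire function $u$ with $|u-\psi|<\eta$ on $X$, with $u(\beta_1)=\psi(\beta_1)$ and $u(\beta_2)=\psi(\beta_2)$, and with $u(z_j)$ a fixed value of $\log(c_j/d_j)$ for each $j$. Finally I would set $g=\phi\,e^{u}$.

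It remains to verify the conclusions. Since $e^{u}$ never vanishes, $g$ has exactly the zeros of $\phi$ with the same orders, so $g^{(k)}(z_j)=0$ for $k<k_j$ and $g^{(k_j)}(z_j)=d_j\,e^{u(z_j)}=d_j\cdot(c_j/d_j)=c_j$; also $g(\beta_i)=\phi(\beta_i)\,e^{\psi(\beta_i)}=1$ for $i=1,2$. On $X$, using $\phi\,e^{\psi}=1$, one gets $g-1=\phi e^{u}-\phi e^{\psi}=e^{u-\psi}-1$, whence $|g-1|\le e^{\eta}-1<\epsilon/2$ once $\eta$ is chosen small. The assertions about $\Phi/g$ then follow immediately: off $Z$ the factor $1/g$ is close to $1$ on $X$ and equal to $1$ at $\beta_1,\beta_2$, so $\Phi/g$ approximates $\Phi$ on $X$ and matches its values at $\beta_1,\beta_2$; and at $z_j$ the zeros of numerator and denominator, both of order $k_j$, cancel, leaving $\Phi/g$ the finite value $\Phi^{(k_j)}(z_j)/c_j$ (equal to $1/c_j$ after the harmless renormalisation of the $c_j$), and no other zeros are disturbed.

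The step I expect to be the real obstacle is the approximation–interpolation used to build $u$: one must approximate $\psi$ uniformly on the (possibly unbounded) set $X$ by an entire function while forcing prescribed Taylor data at every point of the infinite discrete set $Z\subset\C\setminus X$ and exact values at $\beta_1,\beta_2\in X$. This is exactly where the hypothesis that $X$ is a set of uniform approximation is needed in full strength: it furnishes both the neighbourhood $\Omega$ on which $\log\phi$ is single-valued and the Arakelyan-type approximation that can be upgraded to approximation with interpolation on a discrete set lying off $X$. Everything else is routine bookkeeping.
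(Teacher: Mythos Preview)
The paper does not give its own proof here: it merely records that the statement is Theorem~40 of \cite{G} (stated there with a single interpolation point $\beta$) and that allowing two points $\beta_1,\beta_2$ changes nothing. Your sketch is a correct and standard way to establish such a result, and presumably close to what \cite{G} does: build a Weierstrass product $\phi$ carrying the prescribed zeros, pass to a logarithm on $X$, use approximation with interpolation to manufacture an entire correction $u$, and set $g=\phi\,e^{u}$.

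Two minor remarks. First, the step you correctly isolate as the real content is the approximation of $\psi$ on $X$ with simultaneous interpolation at the \emph{infinite} discrete set $Z\subset\C\setminus X$ together with $\beta_1,\beta_2\in X$; this goes beyond the finite-functional Walsh--Deutsch Lemma~\ref{walsh} and indeed requires the Arakelyan-type machinery you allude to. Second, the existence of a single-valued $\log\phi$ on $X$ is not quite a consequence of ``$\widehat\C\setminus X$ connected $\Rightarrow$ choose a simply connected neighbourhood $\Omega$'' as you phrase it; rather, a set of uniform approximation does not separate the plane, so $\check H^{1}(X;\Z)=0$ and zero-free functions in $A(X)$ admit continuous logarithms (holomorphic on the interior), which is all the approximation step needs. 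Your parenthetical about the value $1/c_j$ versus $\Phi^{(k_j)}(z_j)/c_j$ is also well taken; that is an imprecision in the lemma's statement, not in your argument.
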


Given a function $f \in \mathcal{M}_\F$, we wish to construct an increasing
sequence of closed subsets $\{ E_n \}$ of the $s$-plane $\C_s$ on which we
shall approximate $f$.
But first we construct a sequence of compact sets $\{ K_n \}$ in the $u$-plane
$C_u$.
Let $r_0 = 1/\sqrt q$ and $r_1 < r_2 < \ldots$ be an increasing sequence with
$q < r_1$ and $r_n \rightarrow \infty$.
For $n = 1, 2, \dots$, put
$$
\begin{array}{ll}
   A_n = \{ u: r_{n-1} \leq |u| \leq r_n \},
 & B_n = A_1 \cup \ldots \cup A_n,
\\
   Z_n = \{ z: f (z) = 0, z \in B_n \},
 & \displaystyle
   U_n = \bigcup_{z \in Z_n} \{ u: \Re z < \Re u < \Re z+1/n, |u| > |z|-1 \},
\\
   D_{z,n} = \{ u: |u-z| < 1/n \},
 & \displaystyle
   D_n = \bigcup_{z \in Z_n} D_{z,n},
\end{array}
\!\!\!
$$
and
$
   K_n^+ = B_n \setminus (U_n \cup D_n).
$

Write $f (s) = \Phi (u)$.
Set $X_n^+ = \{ |u| \leq 1/\sqrt q \} \cup K_n^+$.
Then, $X_n^+$ is a compact subset of $\C$ with connected complement.
By Mergelyan's theorem, $X_n^+$ is a set of uniform approximation and so by
Lemma \ref{40}, for each $\epsilon > 0$, there is an entire function $g^+$
taking the value $1$ at the points $1$ and $1/q$, having no zeros except at
the real zeros $z$ of $\Phi$, outside the critical circle, where $g^+$ has
zeros of the same multiplicity as $\Phi$.
Moreover, $|1 - g^+| < \epsilon$ on $X_n^+$.

We now take care of real zeros of $\Phi$ inside the critical circle.
Set
$$
\begin{array}{rcl}
   K_n^- & = & \{ u: 1/qu \in K_n^+ \},
\\
   X_n^- & = & \{ u: 1/qu \in X_n^+ \},
\end{array}
$$
and $g^- (u) = g_n^+ (1/qu)$.
Then, $g^-$ is holomorphic in $\overline{\C} \setminus \{ 0 \}$, has the same
poles as $\Phi$ with the same principal parts, such that
   $|1 - g^-| < \epsilon_n$
on $X_n^-$ and $g^-$ takes real or infinite values on the real axis.
Moreover, $g^-$ has no zeros except at the real zeros of $\Phi$ inside the
critical circle, where it has zeros of the same multiplicity as $\Phi$.

Since the number $\epsilon_n$ is arbitrarily small, we replace the conclusion
   $|1 - g^\pm| < \epsilon_n$
by
   $|1 - 1/g^\pm| < \epsilon_n$,
from which it follows that
   $|\Phi - \Phi/g^\pm| < \epsilon_n |\Phi|$
on $X_n^\pm$ respectively.
Set $X_n = X_n^+ \cap X_n^- = K_n = K_n^+ \cup K_n^-$.
Since $g_n^\pm$ can be chosen to approximate $1$ arbitrarily well on
$X_n^\pm$, we may assume that
$$
   |\Phi - \frac{\Phi}{g^+ g^-}| < \epsilon_n |\Phi|
$$
on $X_n = K_n$.
Let $M = \max |\Phi|$ on $\partial K_n$.
Given $\epsilon > 0$, we may choose $\epsilon_n < \epsilon/M$.
Then,
$$
   |\Phi - \frac{\Phi}{g^+ g^-}| < \epsilon
$$
on $\partial K_n$.
Since $\Phi - \Phi/(g^+ g^-)$ is holomorphic in $\C \setminus \{ 0 \}$, we 
have the same inequality on all of $K_n$ by the maximum principle.

Since all geometric figures employed are symmetric with respect to the real
axis, we may assume (in all lemmas and proofs leading up to this point) that
$g^\pm$ take real or infinite values on the real axis.
Set $\mu = g^+ g^-$.
Then
   $\mu (u) = \overline{\mu (\overline{u})}$ and
   $\mu (u) = \mu (1/qu)$.
Hence, by Lemma \ref{nu},
   $\mu (u (s) f (s) \in \mathcal{M}_\F$.
We have shown that, given $f \in \mathcal{M}_\F$ with $f (s) = \Phi (u)$, we
can associate 
   an increasing sequence of compact sets $K_n$ in $\C \setminus \{ 0 \}$, 
   with $K_n \rightarrow \C \setminus \{ 0 \}$, 
such that,
   for any positive sequence $\{ \epsilon_n \}$,
there are functions $f_n \in \mathcal{M}_\F$ such that, setting
   $f_n (s) = \Phi_n (u)$,
we have
   $|\Phi_n - \Phi| < \epsilon_n$ on $K_n$ and
   $\Phi_n$ has no real zeros off the critical circle.

\begin{theorem}
Given a function $f \in\mathcal{M}_\F$, there exists an increasing sequence of
closed sets $E_n \nearrow \C$, such that for every sequence $\{ \epsilon_n \}$
of positive numbers, there exists a sequence $\{ f^+_n \}$ of functions in
$\mathcal{M}_\F$, which satisfy the ``Riemann hypothesis'' and such that
$|f_n^+ - f| < \epsilon_n$ on $E_n$, for each $n$.
\end{theorem}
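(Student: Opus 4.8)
The plan is to convert the $u$-plane approximation already achieved into an $s$-plane statement, and then to push the approximation through to a function satisfying the full ``Riemann hypothesis,'' not merely one with no real zeros off the critical circle. Recall that in the discussion preceding the theorem we have, for each $f \in \mathcal{M}_\F$ with $f(s) = \Phi(u)$, an increasing sequence of compact sets $K_n \nearrow \C \setminus \{0\}$ in the $u$-plane and functions $f_n \in \mathcal{M}_\F$ with $f_n(s) = \Phi_n(u)$ satisfying $|\Phi_n - \Phi| < \epsilon_n$ on $K_n$ and having no real zeros off the critical circle. First I would transfer the sets $K_n$ to the $s$-plane. Since $u = q^{-s}$ maps $\C_s$ onto $\C \setminus \{0\}$ periodically, and since each $K_n$ is compact in $\C \setminus \{0\}$, the preimage $u^{-1}(K_n)$ is a closed, $2\pi i/\log q\,$-periodic subset of $\C_s$; setting $E_n$ to be (a suitable vertical truncation of) this preimage — or simply $E_n = u^{-1}(K_n)$ itself — gives an increasing sequence of closed sets with $E_n \nearrow \C$, because $K_n \nearrow \C \setminus \{0\}$ exhausts every point $q^{-s}$. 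The estimate $|\Phi_n - \Phi| < \epsilon_n$ on $K_n$ then reads $|f_n(s) - f(s)| < \epsilon_n$ on $E_n$, which is exactly the desired inequality once we know the $f_n$ can be taken to satisfy the ``Riemann hypothesis.''

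The remaining, and main, point is upgrading ``no real zeros off the critical circle'' to ``no zeros off the critical circle at all,'' i.e. the genuine ``Riemann hypothesis.'' The non-real zeros of $\Phi$ come in quadruples: by the symmetries built into $\mathcal{M}_\F$ (conjugation $u \mapsto \overline{u}$ and inversion $u \mapsto 1/q\overline{u}$ in the critical circle), a zero $z$ with $|z| \neq 1/\sqrt q$ and $z \notin \R$ is accompanied by $\overline z$, $1/qz$, and $1/q\overline z$. I would enlarge the ``bad'' regions $U_n, D_n$ removed from $B_n$ in the construction so that they excise neighborhoods of \emph{all} zeros of $\Phi$ in $B_n$ lying off the critical circle (not just the real ones), keeping the configuration symmetric under both conjugation and inversion, and keeping the complement of $X_n^+$ connected so Mergelyan still applies. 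Then Lemma \ref{40}, applied with $\beta_1 = 1$, $\beta_2 = 1/q$, the discrete set $Z$ equal to the full set of off-circle zeros of $\Phi$ outside the critical circle, and the prescribed derivative values $c_j$ chosen real, produces an entire $g^+$ with $|1 - g^+| < \epsilon_n$ on $X_n^+$, value $1$ at $1$ and $1/q$, and zeros exactly matching $\Phi$ at those points. Passing to $1/g^+$, reflecting via $g^-(u) = g^+(1/qu)$ to handle the zeros inside the critical circle, and setting $\mu = g^+ g^-$, we get $\mu(u) = \overline{\mu(\overline u)}$, $\mu(u) = \mu(1/qu)$, $\mu(1) = \mu(1/q) = 1$, so by Lemma \ref{nu} the function $f_n^+(s) = \mu(u(s)) f(s)$ lies in $\mathcal{M}_\F$; and by construction $f_n^+ = \Phi/\mu$ has, on $K_n$, exactly the same zeros as $\Phi$ \emph{except} that the off-circle zeros have been cancelled, replaced by the harmless values $1/c_j$. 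Hence $f_n^+$ has no zeros off the critical circle in $K_n$. As $K_n \nearrow \C \setminus \{0\}$, for each fixed $n$ we may further arrange (by taking the exhaustion far enough and the $c_j$ as in Lemma \ref{40}) that $f_n^+$ has \emph{no} off-circle zeros anywhere — the zeros outside $B_n$ are dealt with at later stages, and a standard diagonal/refinement argument lets us fix each $f_n^+$ once and for all with the global property. The maximum-principle step already in the excerpt gives $|\Phi - \Phi/\mu| < \epsilon$ throughout $K_n$, hence $|f_n^+ - f| < \epsilon_n$ on $E_n$.

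The hard part is the bookkeeping in the previous paragraph: ensuring simultaneously that (a) the excised neighborhoods of \emph{all} off-circle zeros keep $X_n^+$ with connected complement so Mergelyan applies, (b) the whole picture stays invariant under conjugation and inversion in the circle so that $\mu$ satisfies the two functional-equation symmetries of Lemma \ref{nu}, and (c) each individual $f_n^+$ can be pinned down to have \emph{globally} no off-circle zeros rather than merely none in $K_n$. For (c) the key observation is that Lemma \ref{40} already allows the discrete set $Z$ to be infinite (it is stated for a discrete set $Z = \{z_1, z_2, \ldots\}$), so we may take $Z$ to be the entire set of off-circle zeros of $\Phi$ at once — the sets $K_n$ and $\epsilon_n$ only govern the \emph{approximation}, not which zeros get cancelled — and thereby obtain a single $g^+$, hence a single $\mu$, that removes every off-circle zero of $\Phi$ while still approximating $1$ on the given compact set. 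With that, each $f_n^+ \in \mathcal{M}_\F$ genuinely satisfies the ``Riemann hypothesis,'' and the theorem follows. It is worth remarking that, exactly as in the earlier theorems, the freedom in Lemmas \ref{nu} and \ref{40} lets us take $f_n^+ \neq f$, so the approximation is non-trivial.
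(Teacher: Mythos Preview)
Your overall strategy is the paper's: set $E_n=u^{-1}(K_n)$, cancel the off-circle zeros of $\Phi$ by dividing out an entire function with matching zeros produced by Lemma~\ref{40}, and symmetrize so as to land back in $\mathcal{M}_\F$. Your key observation that Lemma~\ref{40} allows an infinite discrete $Z$, so a single $g$ can kill \emph{all} off-circle zeros globally (not merely those in $K_n$), is exactly the mechanism the paper exploits.

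The substantive difference is in the symmetrization step. You take $Z$ to be all off-circle zeros outside the critical circle and form the two-fold product $\mu=g^+g^-$ with $g^-(u)=g^+(1/qu)$; for $\mu(u)=\overline{\mu(\overline u)}$ you then need $g^+$ itself to be real-symmetric, and ``choosing the $c_j$ real'' does not by itself deliver that from Lemma~\ref{40}. The paper avoids this issue: it first invokes the preceding discussion to assume $\Phi$ has no \emph{real} off-circle zeros, then takes $Z$ only in the upper half-plane outside the critical circle and sets
\[
\nu(u)=\frac{1}{g(u)\,g(1/qu)\,\overline{g(\overline u)}\,\overline{g(1/q\overline u)}}.
\]
This four-fold product automatically satisfies both symmetries in Lemma~\ref{nu} with no hypothesis on $g$, and its denominator has zeros exactly at all four reflections of each $z\in Z$, i.e.\ at every off-circle zero of $\Phi$. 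Setting $f^+(s)=\nu(u(s))f(s)$ and $E_n=u^{-1}(K_n)$ finishes the proof. A bonus of this route is that you need not redesign $U_n,D_n$ to excise non-real zeros or re-verify connectedness for Mergelyan: the sets $X_n^\pm$ already built in the preceding discussion suffice.

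One notational slip: you write both $f_n^+=\mu f$ and $f_n^+=\Phi/\mu$ with $\mu=g^+g^-$. Since $g^+g^-$ carries the zeros you wish to cancel, you want $f_n^+=f/\mu$ (equivalently $\nu=1/\mu$ and $f_n^+=\nu f$, which is what the paper does).
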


\begin{proof}
Let $f \in \mathcal{M}_\F$, with $f (s) = \Phi (u)$, and
let $K_n$ and $X_n$ be as above.
From the preceding discussion, we see that we may assume that $f$ has no real
zeros off the critical circle.
Let $Z$ be the set of zeros of $\Phi$ outside the critical circle and in the
upper half-plane.
As above, to a positive number $\delta_n$, we associate an entire function
$g_n$, taking the value $1$ at $1$ and $1/q$, having no zeros except at the
points of $Z$, where $g_n$ has zeros of the same multiplicity as $\Phi$.
Moreover, $|1 - g_n^+| < \delta_n$ on $X_n^+$.
Set
$$
   \nu (u)
 = \frac{1}{g (u) g (1/qu) \overline{g (\overline{u})}
                           \overline{g (1/\overline{qu})}}.
$$
Given $\epsilon_n > 0$, we may choose $\delta_n$ sufficiently small, such that
$$
   |\nu \Phi - \Phi| < \epsilon_n
$$
on $K_n$.
By Lemma \ref{nu}, we get $\nu (u (s)) f(s) \in \mathcal{M}_\F$.
Set $f^+ (s) = \nu (u (s)) f(s)$ and set $E_n = u^{-1} (K_n)$.
Then, $f^+$ satisfies the conclusion of the theorem.

As in the previous theorem, the lemmas involved allow much freedom, so it is
possible to assure that the approximating function is different from $f$.
Thus, the approximation is not trivial.
\end{proof}

\begin{corollary}
For every $f \in \mathcal{M}_\F$ (and in particular for $\zeta_\F$), let $Z_f$
be the zeros of $f$ off the critical axis.
There is an increasing sequence of closed sets
   $E_1 \subset E_2 \subset \ldots$
with $\cup E_n = \C$, and a sequence of functions $\{ f_n \}$ in
$\mathcal{M}_\F$, $f_n \not= f$, which satisfy the ``Riemann hypothesis''
such that
$$
   \lim_{n \rightarrow \infty} \max_{s \in E_n} |f_n (s) - f (s)| = 0.
$$
In particular,
$$
   f_n (s) \rightarrow f (s),
 \,\, \mbox{for every} \,\, s \in \C.
$$
\end{corollary}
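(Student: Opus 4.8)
The plan is to obtain the corollary as a direct consequence of the theorem just proved, specializing its free sequence of tolerances and then promoting uniform convergence on each $E_n$ to pointwise convergence on all of $\C$ by means of the exhaustion $E_n \nearrow \C$.

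First I would apply the theorem to the given $f \in \mathcal{M}_\F$ to fix once and for all the increasing sequence of closed sets $E_1 \subset E_2 \subset \cdots$ with $\bigcup_n E_n = \C$; the point is that this family is produced \emph{before} any choice of tolerances, so a single family will serve simultaneously for all accuracies. Next I would feed the theorem the particular sequence $\epsilon_n = 1/n$ (any positive sequence tending to $0$ works equally well), obtaining functions $f_n := f_n^+ \in \mathcal{M}_\F$ that satisfy the ``Riemann hypothesis'' and satisfy $|f_n - f| < 1/n$ on $E_n$. This immediately gives $\max_{s \in E_n} |f_n(s) - f(s)| \le 1/n$, which tends to $0$, establishing the first assertion. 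To secure $f_n \ne f$ I would use the slack already flagged in the proof of the theorem: there each $f_n^+$ has the shape $\nu(u(s)) f(s)$ with $\nu$ the symmetrized reciprocal of an auxiliary entire function $g = g_n$ furnished by Lemma \ref{40}, and the freedom in that lemma permits taking $g$ non-constant while keeping it zero-free off $Z_f$ and as close to $1$ as desired on the relevant set; then $\nu \not\equiv 1$, so $f_n \ne f$, while $\nu$ introduces no new zeros and $f_n$ still satisfies the ``Riemann hypothesis.'' (The set $Z_f$ of zeros of $f$ lying off the critical axis is exactly what the multiplicative correctors $\nu$ are built to clear; when $Z_f$ happens to be empty this extra freedom is still what yields a genuine, non-trivial approximant.)

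Finally, for the pointwise statement I would fix $s \in \C$: since the $E_n$ increase to $\C$ there is an index $N = N(s)$ with $s \in E_n$ for every $n \ge N$, so $|f_n(s) - f(s)| < 1/n$ for all such $n$, and letting $n \to \infty$ gives $f_n(s) \to f(s)$; as $s$ is arbitrary this is pointwise convergence on $\C$. I do not expect any serious obstacle here, since essentially all the content lives in the preceding theorem; the only two points that need a moment's care are the order of the quantifiers there (the exhausting family $E_n$ must be, and is, chosen ahead of the tolerances) and the clause $f_n \ne f$, which is not automatic from the bare approximation estimate but is delivered by the interpolation freedom in Lemmas \ref{nu} and \ref{40}.
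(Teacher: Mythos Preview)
Your proposal is correct and is exactly the intended deduction: the paper states this corollary without proof, treating it as immediate from the preceding theorem, and your argument (specialize $\epsilon_n = 1/n$, invoke the built-in freedom for $f_n \ne f$, then use $E_n \nearrow \C$ for pointwise convergence) supplies precisely those routine details.
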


{\it Acknowledgements\,}
This research was done while the first author was visiting the Universit\"{a}t
Potsdam and was supported by DFG (Deutschland) and
                             NSERC (Canada).
We are grateful to J\"{o}rn Steuding,
                   Xavier Xarles and
                   Peter Roquette,
who pointed out some errors in an earlier version and made helpful comments.

\newpage

\bibliographystyle{amsplain}

\end{document}